\newcommand{\field}[1]{\mathbb{#1}}
\newcommand{\fs}[1]{\mathsf{#1}}
\DeclareMathOperator*{\supp}{supp}
\DeclareMathOperator{\diag}{diag}
\DeclareMathOperator{\Wrons}{\mathscr{W}}
\DeclareMathOperator{\trace}{tr}
\newcommand{\tp}{\intercal}
\newcommand{\ovl}[1]{\overline{#1}}
\newcommand{\bigO}[1]{\mathop{\mathscr{O}}\left(#1\right)}
\let\Re\relax
\DeclareMathOperator{\Re}{Re}
\let\Im\relax
\DeclareMathOperator{\Im}{Im}
\newcommand{\vv}[1]{\boldsymbol{#1}}
\newcommand{\vs}[1]{\boldsymbol{#1}}
\DeclareMathOperator{\fourier}{\mathscr{F}}
\newcommand{\wtilde}[1]{\widetilde{#1}}
\newcommand{\et}{\textit{et~al.}}
\newtheorem{theorem}{Theorem}[section]
\newtheorem{prop}[theorem]{Proposition}
\newtheorem{lemma}[theorem]{Lemma}
\newtheorem{rem}{Remark}[section]
\journal{Communications in Nonlinear Science and Numerical Simulation}
\begin{document}
\begin{frontmatter}

\title{Numerical Methods for Fast Nonlinear Fourier Transformation, Part I: Exponential 
Runge-Kutta and Linear Multistep Methods}
\author{Vishal Vaibhav}
\ead{vishal.vaibhav@gmail.com}
\address{Delft Center for Systems and Control, 
Delft University of Technology, Mekelweg 2. 2628 CD Delft, 
The Netherlands}

\begin{abstract}
The main objective of this series of papers is to explore the entire landscape
of numerical methods for fast nonlinear Fourier transformation (NFT) within the class of integrators
known as the exponential integrators. In this paper, we explore the theoretical
aspects of exponential Runge-Kutta (RK) and linear multistep (LM) methods, in
particular, the stability and convergence of these methods via the transfer 
matrix formulation. The analysis carried out in the paper shows that while the
exponential LM methods are naturally amenable to FFT-based fast polynomial
arithmetic, the RK methods require equispaced nodes to achieve that. Therefore,
each these family of methods is capable of yielding a family of fast NFT algorithms 
such that the scattering coefficients can be computed with a complexity of 
$\bigO{N\log^2N}$ and a rate of convergence given by $\bigO{N^{-p}}$ where $N$ 
is the number of samples of the signal and $p$ is order of the underlying
discretization scheme. Further,
while RK methods can accommodate vanishing as well as periodic boundary
conditions, the LM methods can only handle the former type of boundary
conditions without requiring a starting procedure. The ideas presented in 
this paper extend naturally to the family of integrators known as general 
linear methods which will be explored in a forthcoming paper. 
\end{abstract}

\begin{keyword}
Zakharov-Shabat Scattering Problem  \sep 
Fast Nonlinear Fourier Transform    \sep 
Exponential Runge-Kutta Method      \sep 
Exponential Linear Multistep Methods
\PACS 
02.30.Ik \sep
42.81.Dp \sep
03.65.Nk
\end{keyword}
\end{frontmatter}

\section*{Notations}
\label{sec:notations}
The set of real numbers (integers) is denoted by $\field{R}$ ($\field{Z}$) and 
the set of non-zero positive real numbers (integers) by $\field{R}_+$ 
($\field{Z}_+$). The set of complex numbers are denoted by $\field{C}$,
and, for $\zeta\in\field{C}$, $\Re(\zeta)$ and $\Im(\zeta)$ refer to the real
and the imaginary parts of $\zeta$, respectively. The complex conjugate of 
$\zeta\in\field{C}$ is denoted by $\zeta^*$ and $\sqrt{\zeta}$ denotes its
square root with a positive real part. The upper-half (lower-half) of $\field{C}$ 
is denoted by $\field{C}_+$ ($\field{C}_-$) and it closure by $\ovl{\field{C}}_+$
($\ovl{\field{C}}_-$). The set $\field{D}=\{z|\,z\in\field{C},\,|z|<1\}$
denotes an open unit disk in $\field{C}$ and $\ovl{\field{D}}$ denotes its 
closure. The set $\field{T}=\{z|\,z\in\field{C},\,|z|=1\}$ denotes the unit 
circle in $\field{C}$. The Pauli's spin matrices are denoted by, 
$\sigma_j,\,j=1,2,3$, which are defined as
\[
\sigma_1=\begin{pmatrix}
0 &  1\\
1 &  0
\end{pmatrix},\quad
\sigma_2=\begin{pmatrix}
0 &  -i\\
i &  0
\end{pmatrix},\quad
\sigma_3=\begin{pmatrix}
1 &  0\\
0 & -1
\end{pmatrix},
\]
where $i=\sqrt{-1}$. For uniformity of notations, we denote
$\sigma_0=\text{diag}(1,1)$. Matrix transposition is denoted by $(\cdot)^\tp$
and $I$ denotes the identity matrix. For any two vectors 
$\vv{u},\vv{v}\in\field{C}^2$, $\Wrons(\vv{u},\vv{v})\equiv (u_1v_2-u_2v_1)$ 
denotes the Wronskian of the two vectors. The kronecker product of two 
matrices $A$ and $B$ is denoted by $A\otimes B$. The support of a function
$f:\Omega\rightarrow\field{R}$ in $\Omega$ is defined as $\supp
f=\ovl{\{x\in\Omega|\,f(x)\neq0\}}$. The Lebesgue spaces of complex-valued 
functions defined in $\field{R}$ are denoted by 
$\fs{L}^p$ for $1\leq p\leq\infty$ with their corresponding 
norm denoted by $\|\cdot\|_{\fs{L}^p}$ or $\|\cdot\|_p$. The class of continuous 
functions is denoted by $\fs{C}$ and $\fs{C}^{m}$ comprises function that are at
least $m$-times differentiable.
The inverse Fourier-Laplace transform of a function $F(\zeta)$ analytic in 
$\ovl{\field{C}}_+$ is defined as
\[
\vv{f}(\tau)=\frac{1}{2\pi}\int_{\Gamma}F(\zeta)e^{i\zeta\tau}\,d\zeta,
\]
where $\Gamma$ is any contour parallel to the real line.

\section{Introduction}
The main objective of this series of papers is to provide a comprehensive survey
of the numerical methods available for designing nonlinear Fourier transform
(NFT) algorithms with desired stability properties, error behavior and (low) 
complexity of computation. Our primary interest is in the class of 
integrators known as \emph{exponential integrators} which were recently shown to 
be promising in designing fast NFT algorithms~\cite{V2017INFT1,V2018LPT} that 
are stable as well as capable of
achieving higher orders of convergence. It must be noted that the family of methods 
considered in the aforementioned works hardly represent the totality of the methods 
available for the numerical solution of ordinary differential equations (ODEs).
Besides, there are several open problems still to be addressed, for instance, the work presented 
in~\cite{V2018LPT} does not address the NFTs with periodic boundary condition,
or, beyond the second order of convergence, the differential approach has not
been shown to be successful in devising inverse NFT 
algorithms~\cite{BLK1985,V2017INFT1,V2018BL, V2018TL}. Therefore, 
the purpose of this series of papers is to explore the entire
landscape of available numerical methods that can yield low complexity
algorithms for direct NFT (and perhaps an inverse NFT within the differential
approach), and, characterize their utility 
under various constraints and performance requirements. In this paper, we consider two
families of integrators known as the \emph{Runge-Kutta} method and \emph{linear multistep
methods}~\et~\cite{HNW1993}.  

Our exposition begins by collecting some of the basic theoretical results 
that characterize the signal processing aspects of NFTs. The main
results of this paper are contained in Sec.~\ref{eq:num-scheme} where we present 
the numerical scheme. It is quite fortunate that the existing
literature on the classical methods considered in this work for the solution of
the Zakharov-Shabat problem (introduced later in this section) is entirely 
sufficient for our purpose. In this light, the primary contribution of this paper consists in unifying
the approach for fast NFTs and demonstrating its connection with some of the
time tested methods for ODEs. These areas of research, however, are still very alive given
that the quest for designing efficient Runge-Kutta and related methods is far
from complete. For instance, general linear methods which appeared in the work of 
J.~C. Butcher~\cite{Butcher2003,J2009} is an evolving novel technique that essentially
combines the benefits of Runge-Kutta and linear multistep methods--this would be
the subject matter of a forthcoming paper in this series.

\subsection{The AKNS system}
\label{sec:akns}
We begin our discussion with a brief review of the scattering theory closely
following the formalism presented in~\cite{AKNS1974}. The nonlinear Fourier
transform of any signal is defined via the Zakharov-Shabat (ZS)~\cite{ZS1972} scattering 
problem which can be stated as follows: For $\zeta\in\field{R}$ and $\vv{v}=(v_1,v_2)^{\tp}$,
\begin{equation}\label{eq:ZS-prob}
\vv{v}_t = -i\zeta\sigma_3\vv{v}+U(t)\vv{v},
\end{equation}
where $\sigma_3$ is one of the Pauli matrices defined in the beginning of this
article. The potential 
$U(t)$ is defined by $U_{11}=U_{22}=0,\,U_{12}=q(t)$ and $U_{21}=r(t)$ with $r=\kappa q^*$
($\kappa\in\{+1, -1\}$). Here, $\zeta\in\field{R}$ is known as the \emph{spectral parameter}
and $q(t)$ is the complex-valued signal. The solution of the scattering 
problem~\eqref{eq:ZS-prob}, henceforth referred
to as the ZS problem, consists in finding the so called 
\emph{scattering coefficients} which are defined through special solutions 
of~\eqref{eq:ZS-prob} known as the \emph{Jost solutions} which are linearly 
independent solutions of~\eqref{eq:ZS-prob} such that they have a plane-wave 
like behavior at $+\infty$ or $-\infty$.
\begin{itemize}
\item\emph{First kind}: The Jost solutions of the first kind, denoted
by $\vs{\psi}(t;\zeta)$ and $\ovl{\vs{\psi}}(t;\zeta)$, are the linearly
independent solutions of~\eqref{eq:ZS-prob} which have the following asymptotic 
behavior as $t\rightarrow\infty$:
$\vs{\psi}(t;\zeta)e^{-i\zeta t}\rightarrow(0,1)^{\tp}$ and 
$\ovl{\vs{\psi}}(t;\zeta)e^{i\zeta t}\rightarrow (1,0)^{\tp}$.

\item\emph{Second kind}: The Jost solutions of the second kind, denoted by
$\vs{\phi}(t;\zeta)$ and $\overline{\vs{\phi}}(t;\zeta)$, are the linearly 
independent solutions of~\eqref{eq:ZS-prob} which have the following asymptotic 
behavior as $t\rightarrow-\infty$: 
$\vs{\phi}(t;\zeta)e^{i\zeta t}\rightarrow(1,0)^{\tp}$ and 
$\overline{\vs{\phi}}(t;\zeta)e^{-i\zeta t}\rightarrow(0,-1)^{\tp}$.
\end{itemize}
On account of the linear independence of 
$\vs{\psi}$ and $\ovl{\vs{\psi}}$, we have
\begin{equation*}
\begin{split}
\vs{\phi}(t;\zeta)&=a(\zeta)\ovl{\vs{\psi}}(t;\zeta)
+b(\zeta)\vs{\psi}(t;\zeta),\\
\ovl{\vs{\phi}}(t;\zeta)&=-\ovl{a}(\zeta)\vs{\psi}(t;\zeta)
+\ovl{b}(\zeta)\ovl{\vs{\psi}}(t;\zeta).
\end{split}
\end{equation*}
Similarly, using the pair $\vs{\phi}$ and $\ovl{\vs{\phi}}$, we have
\begin{equation*}
\begin{split}
\vs{\psi}(t;\zeta)&=-a(\zeta)\overline{\vs{\phi}}(t;\zeta)
+\ovl{b}(\zeta)\vs{\phi}(t;\zeta),\\
\ovl{\vs{\psi}}(t;\zeta)&=\overline{a}(\zeta)\vs{\phi}(t;\zeta)
+{b}(\zeta)\ovl{\vs{\phi}}(t;\zeta).
\end{split}
\end{equation*}
The coefficients appearing in the equations above can be written in terms of the
Jost solutions by using the Wronskian relations: 
\begin{equation}\label{eq:wrons-scoeff}
\begin{split}
&a(\zeta)= \Wrons\left(\vs{\phi},{\vs{\psi}}\right),\quad 
 b(\zeta)= \Wrons\left(\ovl{\vs{\psi}},\vs{\phi}\right),\\
&\ovl{a}(\zeta)=\Wrons\left(\ovl{\vs{\phi}},\overline{\vs{\psi}}\right),\quad
\ovl{b}(\zeta) =\Wrons\left(\ovl{\vs{\phi}},{\vs{\psi}}\right).
\end{split}
\end{equation}
These coefficients are known as the \emph{scattering coefficients} and the process
of computing them is referred to as \emph{direct scattering}.

The analytic continuation of the Jost solution with respect to $\zeta$ is
possible provided the potential is decays sufficiently fast or has a compact
support. If the potential has a compact support, the Jost solutions have analytic 
continuation in the entire complex plane. Consequently, the scattering 
coefficients $a(\zeta)$, $b(\zeta)$, $\ovl{a}(\zeta)$, $\ovl{b}(\zeta)$ are 
analytic functions of $\zeta\in\field{C}$~\cite{AKNS1974}. Further, it was shown
in~\cite{V2018CNSNS,V2018TL} that the scattering coefficients are entire functions of
\emph{exponential type}:
\begin{theorem}\label{thm:ab-estimate0}
Let $q\in\fs{L}^{1}$ with support in $\Omega=[-T,T]$ ($T>0$) and set
$\mu=\|q\|_{\fs{L}^1(\Omega)}$. Then the estimates
\begin{align}
&|b(\zeta)|\leq
\sinh(\mu)\times
\begin{cases}
    e^{2T\Im{\zeta}},&\zeta\in\ovl{\field{C}}_+,\\
    e^{-2T\Im{\zeta}},&\zeta\in{\field{C}}_-,
\end{cases}\label{eq:b-estimate-result}\\
&|\tilde{a}(\zeta)|\leq
[\cosh(\mu)-1]\times
\begin{cases}
    e^{2T\Im{\zeta}},&\zeta\in\ovl{\field{C}}_+,\\
    e^{-2T\Im{\zeta}},&\zeta\in{\field{C}}_-,
\end{cases}\label{eq:a-estimate-result}
\end{align}
where $\tilde{a}(\zeta)$ denotes $[a(\zeta)-1]e^{-2i\zeta T}$, hold.
\end{theorem}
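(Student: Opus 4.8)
The plan is to reduce the scattering coefficients to absolutely convergent series of iterated integrals and to estimate those series term by term. Since $q$ is supported in $[-T,T]$, I would write the Jost solution of the second kind as $\vs{\phi}=(m_1 e^{-i\zeta t},\,m_2 e^{i\zeta t})^{\tp}$; substituting into \eqref{eq:ZS-prob} and imposing $m_1\to1$, $m_2\to0$ as $t\to-\infty$ shows that the modified components satisfy the Volterra equations $m_1(t)=1+\int_{-\infty}^{t}q(s)m_2(s)e^{2i\zeta s}\,ds$ and $m_2(t)=\int_{-\infty}^{t}r(s)m_1(s)e^{-2i\zeta s}\,ds$. Because the support is compact, these equations (and the series below) extend to every $\zeta\in\field{C}$, and $m_1,m_2$ are constant for $t\ge T$. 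Evaluating the Wronskian relations \eqref{eq:wrons-scoeff} at any $t\ge T$, where the Jost solutions of the first kind are plane waves, then yields $a(\zeta)=m_1(T)=1+\int_{-T}^{T}q(s)m_2(s)e^{2i\zeta s}\,ds$ and $b(\zeta)=m_2(T)=\int_{-T}^{T}r(s)m_1(s)e^{-2i\zeta s}\,ds$.

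Next I would insert the Neumann (Picard) iteration into these formulas, so that $a(\zeta)-1$ becomes a sum of even-order nested integrals and $b(\zeta)$ a sum of odd-order nested integrals, each taken over an ordered simplex $-T\le s_n\le\cdots\le s_1\le T$ with an integrand that is a product of factors $q(s_j)$ or $r(s_j)$ carrying alternating exponential weights. Since $r=\kappa q^{*}$ gives $|r|=|q|$, the constant $\kappa$ is immaterial, and the modulus of the order-$n$ term is bounded by the simplex integral $\int_{-T}^{T}|q(s_1)|\int_{-T}^{s_1}|q(s_2)|\cdots=\mu^{n}/n!$ multiplied by the supremum over the simplex of the exponential weight. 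Summing the even orders gives $\cosh\mu-1$ and the odd orders give $\sinh\mu$, which is the origin of the hyperbolic functions in the statement.

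The crux is bounding that exponential weight uniformly over the simplex. In the order-$n$ term the product of exponentials collapses to $\exp(\pm 2i\zeta\Sigma)$, where $\Sigma=s_1-s_2+s_3-\cdots$ is the alternating sum of the ordered variables, so its modulus is $\exp(\mp 2\Im\zeta\,\Sigma)$. The key elementary observation is a two-sided bound on $\Sigma$: pairing consecutive terms as $(s_1-s_2)+(s_3-s_4)+\cdots$, each summand being nonnegative, gives $\Sigma\ge 0$ in the even case and $\Sigma\ge -T$ in the odd case, while the telescoping rearrangement $\Sigma=s_1-(s_2-s_3)-(s_4-s_5)-\cdots$ gives $\Sigma\le 2T$ in the even case and $\Sigma\le T$ in the odd case. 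Inserting $0\le\Sigma\le 2T$ (even) and $-T\le\Sigma\le T$ (odd) into the weight, and treating $\zeta\in\ovl{\field{C}}_+$ and $\zeta\in\field{C}_-$ separately, produces the factors $e^{2T\Im\zeta}$ and $e^{-2T\Im\zeta}$; for $\tilde{a}(\zeta)=[a(\zeta)-1]e^{-2i\zeta T}$ the extra factor $e^{-2i\zeta T}$ is absorbed into the same bookkeeping. I expect this simplex estimate on $\Sigma$ to be the only genuinely non-routine step, since the convergence of the series and the term-by-term estimation are justified throughout by the uniform $\cosh\mu$ and $\sinh\mu$ domination.
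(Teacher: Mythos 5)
Your proof is correct, but note that the paper itself does not prove Theorem~\ref{thm:ab-estimate0} at all --- it imports the result from the cited works \cite{V2018CNSNS,V2018TL}, which establish it by essentially the argument you give: Volterra integral equations for the modified Jost solution $\vs{\phi}$, Neumann/Picard iteration, and term-by-term estimation over the ordered simplex, with the two-sided bound on the alternating sum $\Sigma$ (namely $0\leq\Sigma\leq 2T$ for the even terms and $-T\leq\Sigma\leq T$ for the odd terms) supplying the exponential factors and the $\mu^n/n!$ simplex volumes summing to $\sinh\mu$ and $\cosh\mu-1$. Since your reconstruction of that argument is complete and accurate --- including the correct identification $a(\zeta)=m_1(T)$, $b(\zeta)=m_2(T)$ and the absorption of the factor $e^{-2i\zeta T}$ into the bookkeeping for $\tilde{a}$ --- there is nothing to fix.
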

An immediate consequence of this theorem together with the Paley-Wiener 
theorem~\cite[Chap.~VI]{Yosida1968} is that 
\begin{equation}
\supp\fourier^{-1}[f]\subset[-2T,2T],
\end{equation}
where $f(\zeta)$ denotes either $b(\zeta)$ or $\tilde{a}(\zeta)$. The Fourier transformation 
in the above equation is understood in the sense of distributions.
 
Let $q(t)\in\fs{L}^1$ such that $|q(t)|\leq C\exp[-2\delta|t|]$ almost everywhere in
$\field{R}$ for some constants $C>0$ and $\delta>0$. Then, the functions 
$e^{-i\zeta t}\vs{\psi}$ and $e^{i\zeta t}\vs{\phi}$ 
are analytic in the half-space $\{\zeta\in\field{C}|\,\Im{\zeta}>-\delta\}$. The functions 
$e^{i\zeta t}\overline{\vs{\psi}}$ and $e^{-i\zeta t}\ovl{\vs{\phi}}$ 
are analytic in the half-space $\{\zeta\in\field{C}|\,\Im{\zeta}<\delta\}$. In this 
case, the coefficient $a(\zeta)$ is analytic for $\Im{\zeta}>-\delta$ while the coefficient 
$b(\zeta)$ is analytic in the strip defined by $-\delta<\Im\zeta<\delta$. Further
characterization of time-limited and one-sided signals can be found
in~\cite{V2018TL}. 

Finally, assuming that the analytic continuation is guaranteed, the restriction $r=\kappa q^*$ yields
\begin{align}
&\ovl{\vs{\psi}}(t;\zeta)=
\begin{pmatrix}
\psi_2^*(t;\zeta^*)\\
\kappa\psi_1^*(t;\zeta^*)\\
\end{pmatrix}
=\begin{cases}
\sigma_1\,\vs{\psi}^*(t;\zeta^*),&\kappa=+1,\\
i\sigma_2\,\vs{\psi}^*(t;\zeta^*),&\kappa=-1,
\end{cases}\\
&\ovl{\vs{\phi}}(t;\zeta)
=-\begin{pmatrix}
\kappa\phi_2^*(t;\zeta^*)\\
\phi_1^*(t;\zeta^*)\\
\end{pmatrix}
=\begin{cases}
-\sigma_1\,\vs{\phi}^*(t;\zeta^*),&\kappa=+1,\\
i\sigma_2\,\vs{\phi}^*(t;\zeta^*),&\kappa=-1;
\end{cases}
\end{align}
consequently,
\begin{equation}
\ovl{a}(\zeta)=a^*(\zeta^*),\quad\ovl{b}(\zeta)=-\kappa b^*(\zeta^*).
\end{equation}

The scattering coefficients together with certain 
quantities defined below that facilitate the recovery of the scattering potential are 
collectively referred to as the \emph{scattering data}. The \emph{nonlinear Fourier
spectrum} can then be defined as any of the subsets which qualify as the ``primordial''
scattering data~\cite[App.~5]{AKNS1974}, i.e., the minimal set of quantities 
sufficient to determine the scattering potential, uniquely.

In general, the nonlinear Fourier spectrum for the potential $q(t)$ comprises 
a \emph{discrete} and a \emph{continuous spectrum}. The discrete spectrum consists 
of the so called \emph{eigenvalues} $\zeta_k\in\field{C}_+$, such that 
$a(\zeta_k)=0$, and, the \emph{norming constants} $b_k$ such that 
$\vs{\phi}(t;\zeta_k)=b_k\vs{\psi}(t;\zeta_k)$. For convenience, let the
discrete spectrum be denoted by the set
\begin{equation}\label{eq:set-discrete-spectrum}
\mathfrak{S}_K=\{(\zeta_k,b_k)\in\field{C}^2|\,\Im{\zeta_k}>0,\,k=1,2,\ldots,K\}.
\end{equation}
For compactly supported potentials, $b_k=b(\zeta_k)$. Note that some authors choose to
define the discrete spectrum using the pair $(\zeta_k,\rho_k)$ where
$\rho_k=b_k/\dot{a}(\zeta_k)$ is known as the \emph{spectral amplitude} corresponding
to $\zeta_k$ ($\dot{a}$ denotes the derivative of $a$). The continuous spectrum, also 
referred to as the \emph{reflection coefficient}, is 
defined by $\rho(\xi)={b(\xi)}/{a(\xi)}$ for $\xi\in\field{R}$. 

In the following, we collect some of the useful results on the characterization
of signals synthesized using the inverse NFT when starting 
from the reflection coefficient where the discrete spectrum is assumed to be empty
unless otherwise stated. These results are
extremely important in that their characterization remains invariant under
evolution consistent with the AKNS-class of integrable nonlinear equations. An interesting
result which is due to Van der Mee~\cite{C2014,C2015} which prescribes the
conditions under which $q$ is square integrable is as follows:
\begin{theorem}[Square integrable signals~\cite{C2014,C2015}]\label{thm:L2}
Let $\rho(\xi)\in\fs{L}^2\cap\fs{L}^{\infty}$. For $\kappa=+1$, it is additionally assumed 
that $|\rho|<1$. Then, there exists a unique scattering potential $q\in\fs{L}^2$.
\end{theorem}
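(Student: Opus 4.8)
The plan is to reconstruct $q$ from the purely continuous scattering data by solving the associated Gelfand--Levitan--Marchenko (GLM) integral equations and then reading the potential off the diagonal of the kernel. First I would introduce the Fourier image of the reflection coefficient,
\[
F(x)=\fourier^{-1}[\rho](x)=\frac{1}{2\pi}\int_{\field{R}}\rho(\xi)e^{i\xi x}\,d\xi,
\]
which, because $\rho\in\fs{L}^2$, lies in $\fs{L}^2$ by Plancherel's theorem, with $\norm{F}_{\fs{L}^2}=(2\pi)^{-1/2}\norm{\rho}_{\fs{L}^2}$. Writing the Jost solution $\vs{\phi}$ in its triangular (Povzner--Levitan) representation $e^{i\zeta x}\vs{\phi}(x;\zeta)=(1,0)^{\tp}+\int_x^{\infty}\vv{K}(x,y)e^{i\zeta(y-x)}\,dy$ and inserting it into the completeness relation for the Jost solutions, with the discrete spectrum absent and the continuous data encoded by $\rho=b/a$, produces the coupled GLM system for the two components of $\vv{K}(x,\cdot)$ on the half-line $(x,\infty)$. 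Eliminating one component reduces this to a single linear equation of the form
\[
\bigl(I-\kappa\,\OP{H}_x^{*}\OP{H}_x\bigr)K_1(x,\cdot)=g_x,
\]
where $\OP{H}_x$ is the Hankel integral operator on $\fs{L}^2(x,\infty)$ with kernel $F(y+s)$ and $g_x$ is the scalar inhomogeneity proportional to $\ovl{F(x+\cdot)}$.

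The heart of the argument is the invertibility of $I-\kappa\,\OP{H}_x^{*}\OP{H}_x$, uniformly in $x$. By Nehari's theorem the Hankel operator generated by $F$ has norm at most $\norm{\rho}_{\fs{L}^\infty}$, and truncation to $(x,\infty)$ cannot increase this norm, so $\norm{\OP{H}_x}\leq\norm{\rho}_{\fs{L}^\infty}$ for every $x$. For the focusing case $\kappa=-1$ the operator is $I+\OP{H}_x^{*}\OP{H}_x\succeq I$, which is self-adjoint, bounded below by the identity, and hence boundedly invertible with no further hypothesis. For the defocusing case $\kappa=+1$ the operator is $I-\OP{H}_x^{*}\OP{H}_x$, and here the standing assumption $|\rho|<1$ is used to force $\norm{\OP{H}_x}\leq\theta<1$; the operator is then positive definite, with $(I-\OP{H}_x^{*}\OP{H}_x)^{-1}$ bounded by $(1-\theta^2)^{-1}$ independently of $x$. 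In either case the resolvent exists and is uniformly bounded, so $K_1(x,\cdot)$, and hence $\vv{K}(x,\cdot)$, is well defined in $\fs{L}^2(x,\infty)$.

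With the kernel determined as $K_1(x,\cdot)=(I-\kappa\,\OP{H}_x^{*}\OP{H}_x)^{-1}g_x$, I would recover the potential from the diagonal value $q(x)=-2K_1(x,x)$ (up to the standard normalization) and verify, by differentiating the GLM identity and matching with~\eqref{eq:ZS-prob}, that the reconstructed $q$ indeed has $\rho$ as its reflection coefficient. To establish $q\in\fs{L}^2$ I would estimate $\int_{\field{R}}|K_1(x,x)|^2\,dx$ by combining the uniform resolvent bound from the previous step with $\norm{F}_{\fs{L}^2}<\infty$, yielding schematically $\norm{q}_{\fs{L}^2}\leq C\,\norm{\rho}_{\fs{L}^2}$ with a constant depending only on $\norm{\rho}_{\fs{L}^\infty}$ (and blowing up as $\norm{\rho}_{\fs{L}^\infty}\uparrow 1$ when $\kappa=+1$). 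Uniqueness follows from injectivity of $I-\kappa\,\OP{H}_x^{*}\OP{H}_x$: any $\fs{L}^2$ potential with the same continuous spectrum and empty discrete spectrum generates the same $F$, hence the same $\vv{K}$, hence the same $q$.

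The main obstacle is the uniform-in-$x$ contraction estimate in the case $\kappa=+1$. Whereas $\kappa=-1$ is unconditionally coercive, for $\kappa=+1$ one must upgrade the pointwise bound $|\rho(\xi)|<1$ to a strict, $x$-independent operator bound $\norm{\OP{H}_x}\leq\theta<1$; the borderline situation $\esssup|\rho|=1$ is precisely where a spectral singularity can appear and $q$ may fail to be square integrable, so the quantitative form of Nehari's theorem together with the $\fs{L}^2$-decay of $F$ at $+\infty$ has to be exploited carefully to secure both the invertibility and the finiteness of $\norm{q}_{\fs{L}^2}$. A completely parallel route, which I would keep in reserve as a cross-check, recasts the reconstruction as an $\fs{L}^2$ Riemann--Hilbert problem across $\field{R}$ with jump built from $\rho$, whose unique solvability follows from Zhou's vanishing-lemma theory of Cauchy singular operators under the same sign-definiteness and $|\rho|<1$ conditions.
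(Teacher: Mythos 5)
First, a point of reference: the paper itself does not prove Theorem~\ref{thm:L2}; it quotes the result from van der Mee~\cite{C2014,C2015}. So your proposal has to be judged against those references, and in spirit your route is the same one they take: pass to $F=\fourier^{-1}[\rho]\in\fs{L}^2$ (Plancherel), set up the coupled Gelfand--Levitan--Marchenko system for the triangular kernel, eliminate to get $I-\kappa\,\OP{H}_x^{*}\OP{H}_x$ with $\OP{H}_x$ a truncated Hankel operator, observe that $\kappa=-1$ is unconditionally coercive, and control $\kappa=+1$ via a norm bound on $\OP{H}_x$. That skeleton, including the Nehari bound $\norm{\OP{H}_x}\leq\norm{\rho}_{\fs{L}^\infty}$ and its stability under compression to $(x,\infty)$, is correct.

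However, there is a genuine gap at exactly the point you call ``the main obstacle,'' and flagging a gap is not the same as closing it. The theorem's hypothesis for $\kappa=+1$ is the \emph{pointwise} condition $|\rho(\xi)|<1$, which does not imply $\esssup|\rho|<1$; hence Nehari's theorem gives you only $\norm{\OP{H}_x}\leq\norm{\rho}_{\fs{L}^\infty}$, a bound that may equal $1$, and your contraction constant $\theta<1$ need not exist. What the pointwise condition does give, via Plancherel, is the strict inequality $\norm{\OP{H}_x f}<\norm{f}$ for every $f\neq0$, i.e.\ injectivity of $I-\OP{H}_x^{*}\OP{H}_x$; but an injective, positive, self-adjoint operator need not be boundedly invertible (its spectrum can accumulate at $0$), and the standard rescue --- Fredholm theory --- is unavailable here, because a Hankel operator whose symbol is merely in $\fs{L}^2\cap\fs{L}^{\infty}$ need not be compact (by Hartman's theorem compactness requires the symbol to lie in $\fs{C}_0+\fs{H}^{\infty}$; the characteristic function of a set of finite measure is a counterexample). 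Consequently the uniform-in-$x$ resolvent bound, on which both your existence argument and your estimate $\norm{q}_{\fs{L}^2}\leq C\norm{\rho}_{\fs{L}^2}$ rest, is not established: your proof closes only under the stronger reading $\norm{\rho}_{\fs{L}^\infty}<1$, and under the stated reading the hard analysis is precisely what the cited references supply and what your sketch leaves open. Two secondary gaps in the same vein: (i) your verification step --- differentiate the GLM identity and match with~\eqref{eq:ZS-prob} to conclude the reconstructed $q$ has data $(\rho,\emptyset)$ --- is the classical argument for $\fs{L}^1$-type potentials, whereas here $q$ is only in $\fs{L}^2$, where even defining Jost solutions and the reflection coefficient is delicate (this is the content of~\cite{C2015}, not a routine computation); (ii) $K_1(x,x)$ is a boundary value of a function known only in $\fs{L}^2(x,\infty)$, so the estimate of $\int_{\field{R}}|K_1(x,x)|^2\,dx$ needs the iteration/regularity argument spelled out rather than asserted schematically.
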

Another important class of signals we would like to consider are 
such that their discrete spectrum is empty and the continuous spectrum is 
compactly supported. In analogy with bandlimited
signals in conventional Fourier analysis, such signals are referred to as
\emph{nonlinearly bandlimited}. Such signals lend themselves well to the
development of fast inverse NFT algorithm~\cite{V2018BL}. The characterization of the such
signals, in part, is contained in the preceding theorem as a special case. A constructive 
approach based on sampling expansion presented in~\cite{V2018BL1} yields
somewhat stronger result:
\begin{theorem}[Nonlinearly bandlimited signals~\cite{V2018BL1}]\label{thm:NBL}
Let $\rho(\xi)\in\fs{L}^{\infty}$ with $\supp
\rho\subset[-\sigma,\sigma]$. For $\kappa=+1$, it is additionally assumed that $|\rho|<1$.
Then, there exists a unique scattering potential $q\in\fs{L}^2\cap\fs{C}^{\infty}$.
\end{theorem}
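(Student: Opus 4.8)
The plan is to split the two assertions---existence and uniqueness of $q$ in $\fs{L}^2$, and the additional $\fs{C}^{\infty}$ regularity---and to obtain the first by reduction to Theorem~\ref{thm:L2}, deriving the second from the compact support of $\rho$ via the Gelfand--Levitan--Marchenko (GLM) reconstruction. First I would note that the present hypotheses already subsume those of Theorem~\ref{thm:L2}: a bounded function supported on the compact interval $[-\sigma,\sigma]$ is automatically square integrable (indeed $\rho\in\fs{L}^p$ for every $1\leq p\leq\infty$), and the constraint $|\rho|<1$ for $\kappa=+1$ is inherited verbatim. Theorem~\ref{thm:L2} therefore yields a unique $q\in\fs{L}^2$, so the only genuinely new content is the smoothness statement $q\in\fs{C}^{\infty}$.

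For the regularity the decisive input is that the data entering the Marchenko system is $\fs{C}^{\infty}$. Writing $F=\fourier^{-1}[\rho]$, the inclusion $\rho\in\fs{L}^{\infty}$ together with $\supp\rho\subset[-\sigma,\sigma]$ gives $\rho\in\fs{L}^1$, and differentiation under the integral sign is legitimate to every order: $F^{(n)}(y)=(2\pi)^{-1}\int_{-\sigma}^{\sigma}(i\xi)^n\rho(\xi)e^{i\xi y}\,d\xi$ obeys $|F^{(n)}(y)|\leq\pi^{-1}\sigma^{n+1}\|\rho\|_{\infty}$, so $F\in\fs{C}^{\infty}$ with all derivatives bounded (in fact, by the Paley--Wiener theorem already invoked after Theorem~\ref{thm:ab-estimate0}, $F$ is the restriction to $\field{R}$ of an entire function of exponential type $\sigma$). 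I would then feed this smooth kernel into the GLM equations, which---the discrete spectrum being empty---form a Fredholm system of the second kind of the form $(I+\mathcal{M}_t)\vv{K}(t,\cdot)=-\vv{F}(t+\cdot)$ on $\fs{L}^2(t,\infty)$, where $\mathcal{M}_t$ is the Hankel-type operator assembled from $F$ and $F^{*}$ and the potential is recovered from the diagonal value $q(t)=-2K(t,t)$ (up to the normalization fixed by~\eqref{eq:ZS-prob}). The uniform invertibility of $I+\mathcal{M}_t$ on compact $t$-intervals is precisely the mechanism behind Theorem~\ref{thm:L2}: contractivity forced by $\|\rho\|_{\infty}<1$ (a Neumann series) when $\kappa=+1$, and positivity of the operator when $\kappa=-1$.

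Finally I would run a bootstrap in the parameter $t$. Since $\vv{F}$ is $\fs{C}^{\infty}$ and $\mathcal{M}_t$ depends on $t$ only through the lower limit of integration and the shifted argument of $F$, differentiating the GLM system in $t$ (and in the second slot $s$) reproduces the same operator $I+\mathcal{M}_t$ acting on the derivatives of $\vv{K}$, with right-hand sides built from lower-order derivatives of $\vv{K}$ and $F$ together with a boundary contribution from the moving endpoint. Inverting $I+\mathcal{M}_t$ at each stage and inducting on the differentiation order gives $\vv{K}\in\fs{C}^{\infty}$ jointly, whence $q(t)=-2K(t,t)\in\fs{C}^{\infty}$ as desired. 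The hard part will be the analytic control of this induction: one must show that $(I+\mathcal{M}_t)^{-1}$ is bounded uniformly on compact $t$-sets and that the boundary terms from the variable endpoint are controlled, so that the differentiated kernels stay in $\fs{L}^2(t,\infty)$ and the termwise differentiation under the integral is justified. A constructive alternative, nearer to the argument of~\cite{V2018BL1}, would bypass GLM by expanding $q$ in the sampling series attached to a nonlinearly bandlimited spectrum; there the obstacle migrates to establishing $\fs{L}^2$ convergence of that series and the admissibility of differentiating it term by term.
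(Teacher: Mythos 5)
Your proposal is correct in outline but takes a genuinely different route from the paper. The paper's own proof is essentially a citation of the constructive argument in~\cite{V2018BL1}: there the Gelfand--Levitan--Marchenko system is solved explicitly under the stated hypotheses (via the sampling-expansion approach mentioned just before the theorem), $q\in\fs{L}^2$ holds by construction, and $\fs{C}^{\infty}$ regularity is read off because the resulting closed-form expression for $q$ is ``manifestly differentiable to any order.'' You instead decompose the claim: existence and uniqueness in $\fs{L}^2$ are subsumed by Theorem~\ref{thm:L2}, since a bounded $\rho$ supported in $[-\sigma,\sigma]$ automatically lies in $\fs{L}^2\cap\fs{L}^{\infty}$, so the only new content is smoothness, which you obtain by a derivative bootstrap through the GLM system using that $F=\fourier^{-1}[\rho]$ is entire of exponential type with all derivatives bounded on $\field{R}$. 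Your route buys logical economy and makes the dependence structure transparent (the theorem is Theorem~\ref{thm:L2} plus a regularity statement); the paper's route buys an explicit representation of $q$ --- which is what makes it useful for the high-accuracy synthesis algorithms of~\cite{V2018BL1} --- and gets smoothness for free, at the cost of delegating all the analysis to the cited reference. Notably, your closing remark about a sampling-series alternative is precisely the paper's actual proof. Two small caveats on your sketch: first, your parenthetical explanation of the $\kappa=+1$ mechanism behind Theorem~\ref{thm:L2} as a Neumann-series contraction needs $\|\rho\|_{\infty}<1$, whereas the hypothesis is only the pointwise bound $|\rho|<1$ (the supremum may equal $1$), so invertibility of $I+\mathcal{M}_t$ rests on the finer Fredholm/positivity arguments of~\cite{C2014,C2015}; since you invoke Theorem~\ref{thm:L2} as a black box this does not break your proof, but the aside is inaccurate as stated. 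Second, the bootstrap you flag as ``the hard part'' is standard GLM regularity theory --- uniform invertibility on compact $t$-sets follows from norm-continuity of $t\mapsto\mathcal{M}_t$ together with pointwise invertibility --- so it is routine, but it would still need to be written out, whereas the paper's constructive formula sidesteps it entirely.
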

\begin{proof}
A constructive proof of this theorem can be obtained using the approach presented in~\cite{V2018BL1} 
where it is shown that the solution of the Gelfand-Levitan-Marchenko equation exists
under the conditioned prescribed in the statement of theorem and $q\in\fs{L}^2$ by 
construction. The expression obtained for $q$ is manifestly differentiable to
any order.
\end{proof}
The work~\cite{V2018BL1} also contains several algorithms for synthesizing the 
aforementioned class of signals with extremely high accuracy.

\subsection{Periodic Boundary Conditions}
We would like to qualify our numerical methods for solution of the ZS problem
when the potential is periodic; therefore, we briefly introduce the scattering theory 
for periodic potentials. In this section, we assume that $q(t)$ is 
a complex-valued periodic signal whose period is $2T$. 
The solution of the scattering problem proceeds by computing two linearly independent
solutions
\begin{equation}
\vs{\phi}(t;\zeta)=
\begin{pmatrix}
\phi_1(t;\zeta)\\
\phi_2 (t;\zeta)
\end{pmatrix},\quad
\ovl{\vs{\phi}}(t;\zeta)=
\begin{pmatrix}
\phi^*_2(t;\zeta^*)\\
\kappa\phi^*_1 (t;\zeta^*)
\end{pmatrix},
\end{equation}
such that 
\begin{equation}
\vs{\phi}(t_0;\zeta)=
\begin{pmatrix}
1\\
0
\end{pmatrix}.
\end{equation}
On account of the periodicity of the potential, 
$\vs{\phi}(t+2T;\zeta)$ is also a solution so that it can be expanded in terms of the
linearly independent solutions, $\vs{\phi}$ and $\ovl{\vs{\phi}}$, as 
\begin{equation}
\vs{\phi}(t+2T)=
a(\zeta;t_0)\vs{\phi}(t;\zeta)+
b(\zeta;t_0)\ovl{\vs{\phi}}(t;\zeta),
\end{equation}
where $a(\zeta;t_0)$ and $b(\zeta;t_0)$ are defined to be the scattering 
coefficients which works to be
\begin{equation}
\begin{split}
&a(\zeta;t_0)={\phi}_1(t_0+2T;\zeta),\\
&b(\zeta;t_0)=\kappa {\phi}_2(t_0+2T;\zeta).
\end{split}
\end{equation}
The matrix defined by
\begin{equation}
\Phi(\zeta;t_0)
=\left(\vs{\phi},\kappa\ovl{\vs{\phi}}\right)_{t=t_0+2T}
=\begin{pmatrix}
\phi_1(t_0+2T;\zeta) & \kappa \phi_2^*(t_0+2T;\zeta^*)\\
\phi_2(t_0+2T;\zeta) & \phi_1^*(t_0+2T;\zeta^*)
\end{pmatrix},
\end{equation}
connects the solution at $t=t_0$ to $t=t_0+2T$, therefore, is referred to as the
\emph{transfer} matrix (or the monodromy matrix). It is easy to see that 
$\Wrons(\vs{\phi},\ovl{\vs{\phi}})=\kappa\neq0$ (a consequence
of the fact that $\vs{\phi}$ and $\ovl{\vs{\phi}}$ are linearly independent) 
which yields $\det\Phi=1$.

The next important distinction in the periodic case is the existence of the so
called \emph{Bloch eigenfunctions}, denoted by $\vs{\psi}(t;\zeta)$, which is 
defined by
\begin{equation}
\vs{\psi}(t+2T;\zeta)=\lambda\vs{\psi}(t;\zeta).
\end{equation}
Consider the representation
\begin{equation}
\vs{\psi}(t)=
c(\zeta;t_0)\vs{\phi}(t;\zeta)+
d(\zeta;t_0)\ovl{\vs{\phi}}(t;\zeta),
\end{equation}
which translates into
\begin{equation}
\Phi(\zeta;t_0)\begin{pmatrix}
c\\
d
\end{pmatrix}=
\lambda\begin{pmatrix}
c\\
d
\end{pmatrix}.
\end{equation}
Therefore, a necessary condition for the existence of Bloch eigenfunctions is
that 
\begin{equation}
\det\left[\Phi(\zeta;t_0)-\lambda\sigma_0\right]=\lambda^2-\trace[\Phi]\lambda+1=0.
\end{equation}
The main spectrum of a periodic signal is defined as the set of eigenvalues
$\zeta_i$ such that $\trace[\Phi](\zeta_i)=\pm2$ so that $\lambda=\pm1$. These values of $\lambda$
correspond to Bloch eigenfunctions that are either periodic or anti-periodic.
In the following, we adopt the terminology introduced in Ma and
Ablowitz~\cite{MA1981} for the
characterization of the main spectra. To this end, define 
$a_R$ and $a_I(\zeta)$ to be the real and imaginary parts of $a(\zeta;t_0)$ for
$\zeta\in\field{R}$, respectively, so that their analytic continuation into the complex plane
reads as
\begin{align}
&a_R(\zeta)=\frac{1}{2}\left[a(\zeta;t_0)+\ovl{a}(\zeta;t_0)\right]
=\frac{1}{2}\trace[\Phi](\zeta),\\
&a_I(\zeta;t_0)=\frac{1}{2i}\left[a(\zeta;t_0)-\ovl{a}(\zeta;t_0)\right].
\end{align}
Note that $a_R(\zeta)$ is independent of $t_0$ and satisfies the symmetry
condition $\ovl{a}_R(\zeta)=a_R(\zeta)$ for all $\zeta\in\field{C}$. 
The main spectrum can be characterized based on the zeros of $1-a_R^2(\zeta)$.

\section{The Numerical Scheme}\label{eq:num-scheme}
In this section, we will develop the integrating factor based exponential
Runge-Kutta method for the numerical solution of the 
Zakharov-Shabat scattering problem. We also include a general discussion of the
exponential linear multistep method which was presented in~\cite{V2018LPT} and
employed as a benchmarking tool in~\cite{V2018BL}.

Following~\cite{V2017INFT1,V2018LPT}, in order to develop the numerical 
scheme, we begin with the transformation 
$\tilde{\vv{v}}=e^{i\sigma_3\zeta t}\vv{v}$
so that~\eqref{eq:ZS-prob} becomes
\begin{equation}\label{eq:exp-int}
\tilde{\vv{v}}_t=\wtilde{U}\tilde{\vv{v}},\quad
\wtilde{U}=e^{i\sigma_3\zeta t}Ue^{-i\sigma_3\zeta t},
\end{equation}
with $\wtilde{U}_{11}=\wtilde{U}_{22}=0,\,\wtilde{U}_{12}=q(t)e^{2i\zeta t}$ and
$\wtilde{U}_{21}=r(t)e^{-2i\zeta t}$. 
Let us remark that the general theory of the Runge-Kutta method and linear multistep
methods can be found in classic texts such as Hairer~\et~\cite{HNW1993} and
Butcher~\cite{Butcher2003}.
\subsection{Runge-Kutta Method}
Let the Butcher tableau of a $s$-stage Runge-Kutta method be given by
\begin{equation}
\renewcommand\arraystretch{1.2}
\begin{array}
{c|cccc}
c_1     & a_{11} & a_{12} & \ldots & a_{1s}\\
c_2     & a_{21} & a_{22} & \ldots & a_{2s}\\
\vdots  & \vdots & \vdots & \ddots & \vdots\\
c_s     & a_{s1} & a_{s2} & \ldots & a_{ss}\\
\hline
        & b_1    & b_2 & \ldots    & b_s
\end{array}\equiv 
\begin{array}
{c|c}
\vv{c}  & A \\
\hline
        & \vv{b}^{\tp}
\end{array}.
\end{equation}
Let the step size be $h>0$ and the quantities $c_j\in[0,1]$ be ordered 
so that the nodes within the step can be stated as 
$t_n\leq t_n+c_1h\leq t_n+c_2h\leq\ldots\leq t_n+c_sh\leq t_{n+1}$. For the potential
sampled at these nodes, we use the convention
$Q_{n+c_k}=hq(t_n+c_kh)$, $R_{n+c_k}=hr(t_n+c_kh)$ and
$\wtilde{U}_{n+c_k}=\wtilde{U}(t_n+c_kh)$.
In order for the resulting discrete system to be amenable to FFT-based fast polynomial
arithmetic, it is sufficient to have each of the $c_i$'s to be taken from a
set of uniformly distributed nodes in $[0,1]$. Introducing the intermediate stage quantities
$\tilde{\vv{v}}_{n,k}$ for $k=1,2,\ldots,s$, we have
\begin{equation}
\tilde{\vv{v}}_{n,j}=\tilde{\vv{v}}_{n}+h\sum_{k=1}^sa_{jk}\wtilde{U}_{n+c_k}\tilde{\vv{v}}_{n,k},\quad
j=1,2,\ldots,s,
\end{equation}
with the final update given by
\begin{equation}
\tilde{\vv{v}}_{n+1}=\tilde{\vv{v}}_{n}+h\sum_{k=1}^sb_{k}\wtilde{U}_{n+c_k}\tilde{\vv{v}}_{n,k}.
\end{equation}
Defining the augmented intermediate stage vector 
\begin{equation}
\wtilde{\vs{\Upsilon}}_n
=\left(\tilde{\vv{v}}_{n,1},\tilde{\vv{v}}_{n,2},\ldots,\tilde{\vv{v}}_{n,s}\right)^{\tp}\in\field{C}^{2s},
\end{equation}
and introducing the block-diagonal matrix
\begin{equation}
\wtilde{D}_n=\diag\left(h\wtilde{U}_{n+c_1},\ldots,h\wtilde{U}_{n+c_s}\right)\in\field{C}^{2s\times2s},
\end{equation}
we have
\begin{equation}\label{eq:linear-sys-RK}
\begin{pmatrix}
I_{2s}-(A\otimes\sigma_0)\wtilde{D}_n & 0_{2s\times2}\\
-(\vv{b}^{\tp}\otimes\sigma_0)\wtilde{D}_n & \sigma_0
\end{pmatrix}
\begin{pmatrix}
\tilde{\vs{\Upsilon}}_n\\
\tilde{\vv{v}}_{n+1}
\end{pmatrix}=\mathbf{1}_{s+1}\otimes\tilde{\vv{v}}_{n},
\end{equation}
where `$\otimes$' stands for the Kronecker-product of two matrices and
$\vv{1}_{s+1}$ denotes a column vector of size $s+1$ with $1$ as its entries.
Our goal in the following is to solve the linear system using the Cramer's rule
and find the transfer matrix relationship between $\tilde{\vv{v}}_n$ and
$\tilde{\vv{v}}_{n+1}$. To this end, consider
\begin{equation}
\Delta_{n+1}=\det\begin{pmatrix}
I_{2s}-(A\otimes\sigma_0)\wtilde{D}_n & 0_{2s\times2}\\
-(\vv{b}^{\tp}\otimes\sigma_0)\wtilde{D}_n & \sigma_0 
\end{pmatrix}
=\det\left(I_{2s}-(A\otimes\sigma_0)\wtilde{D}_n\right).
\end{equation}
In order to obtain $v_1^{(n+1)}$ according to the Cramer's rule, we must
consider the determinant
\begin{equation*}
\det\begin{pmatrix}
I_{2s}-(A\otimes\sigma_0)\wtilde{D}_n & \mathbf{1}_{s}\otimes\tilde{\vv{v}}_{n}
& \vv{0}_{2s} \\
-(\vv{b}^{\tp}\otimes\sigma_0)\wtilde{D}_n & \tilde{\vv{v}}_n & (0,1)^{\tp} 
\end{pmatrix}
=\det\begin{pmatrix}
\wtilde{\Gamma} & \mathbf{0}_{2s} & -\mathbf{1}_{s}\otimes(0,1)^{\tp}  \\
-(\vv{b}^{\tp}\otimes\sigma_0)\wtilde{D}_n & \tilde{\vv{v}}_n &  (0,1)^{\tp} 
\end{pmatrix},
\end{equation*}
where $\vv{0}_{2s}$ denotes a column vector of size $2s$ with $0$ as its
entries and
\begin{equation}
\wtilde{\Gamma}=I_{2s}-\left\{(A-\mathbf{1}_{s}\vv{b}^{\tp})\otimes\sigma_0\right\}\wtilde{D}_n.
\end{equation} 
Now, expanding along the second last column, we have
\begin{equation}\label{eq:tm1}
\begin{split}
\tilde{v}_1^{(n+1)}
&=+\frac{\tilde{v}_1^{(n)}}{\Delta_{n+1}}
\det\begin{pmatrix}
\wtilde{\Gamma} & -\mathbf{1}_{s}\otimes(0,1)^{\tp}\\
-(\vv{b}^{\tp}\otimes(0,1))\wtilde{D}_n &  1 
\end{pmatrix}\\
&\quad-\frac{\tilde{v}_2^{(n)}}{\Delta_{n+1}}
\det\begin{pmatrix}
\wtilde{\Gamma} & -\mathbf{1}_{s}\otimes(0,1)^{\tp}\\
-(\vv{b}^{\tp}\otimes(1,0))\wtilde{D}_n &  0 
\end{pmatrix}.
\end{split}
\end{equation}
Next, in order to obtain $v_2^{(n+1)}$ according to the Cramer's rule, we must
consider the determinant
\begin{equation*}
\det\begin{pmatrix}
I_{2s}-(A\otimes\sigma_0)\wtilde{D}_n 
& \vv{0}_{2s}
& \mathbf{1}_{s}\otimes\tilde{\vv{v}}_{n}\\
-(\vv{b}^{\tp}\otimes\sigma_0)\wtilde{D}_n 
& (1,0)^{\tp}
& \tilde{\vv{v}}_n 
\end{pmatrix}
=\det\begin{pmatrix}
\wtilde{\Gamma}& -\mathbf{1}_{s}\otimes(1,0)^{\tp}
& \mathbf{0}_{2s}   \\
-(\vv{b}^{\tp}\otimes\sigma_0)\wtilde{D}_n 
&  (1,0)^{\tp}
& \tilde{\vv{v}}_n  
\end{pmatrix}.
\end{equation*}
Expanding along the second last column, we have
\begin{equation}\label{eq:tm2}
\begin{split}
\tilde{v}_2^{(n+1)}
&=-\frac{\tilde{v}_1^{(n)}}{\Delta_{n+1}}\det\begin{pmatrix}
\wtilde{\Gamma} & -\mathbf{1}_{s}\otimes(1,0)^{\tp}  \\
-(\vv{b}^{\tp}\otimes(0,1))\wtilde{D}_n &  0 
\end{pmatrix}\\
&\quad+\frac{\tilde{v}_2^{(n)}}{\Delta_{n+1}}\det\begin{pmatrix}
\wtilde{\Gamma} & -\mathbf{1}_{s}\otimes(1,0)^{\tp}  \\
-(\vv{b}^{\tp}\otimes(1,0))\wtilde{D}_n &  1 
\end{pmatrix}.
\end{split}
\end{equation}
Let us introduce the transfer matrix $\wtilde{M}_{n+1}$ so that
\begin{equation}\label{eq:ZS-TM0}
\tilde{\vv{v}}_{n+1}=\Delta_{n+1}^{-1}\wtilde{M}_{n+1}\tilde{\vv{v}}_n.
\end{equation} 
Now, the elements of the transfer matrix
can be obtained from the equations~\eqref{eq:tm1} and~\eqref{eq:tm2} which after
some simplification read as follows: The diagonal elements work out to be
\begin{equation}
\begin{split}
\wtilde{M}^{(n+1)}_{11}
&=\det\left[\wtilde{\Gamma}
-\left\{(\mathbf{1}_{s}\vv{b}^{\tp})\otimes
\left(\frac{\sigma_0-\sigma_3}{2}\right)\right\}\wtilde{D}_n\right],\\
\wtilde{M}^{(n+1)}_{22}
&=\det\left[\wtilde{\Gamma}
-\left\{(\mathbf{1}_{s}\vv{b}^{\tp})\otimes
\left(\frac{\sigma_0+\sigma_3}{2}\right)\right\}\wtilde{D}_n\right].
\end{split}
\end{equation}
The off diagonal elements are given by
\begin{equation}
\begin{split}
\wtilde{M}^{(n+1)}_{12}&=\det\wtilde{\Gamma}
-\det\left[\wtilde{\Gamma}
-\left\{(\mathbf{1}_{s}\vv{b}^{\tp})\otimes
\left(\frac{\sigma_1-i\sigma_2}{2}\right)\right\}\wtilde{D}_n\right],\\
\wtilde{M}^{(n+1)}_{21}&=\det\wtilde{\Gamma}
-\det\left[\wtilde{\Gamma}
-\left\{(\mathbf{1}_{s}\vv{b}^{\tp})
\otimes\left(\frac{\sigma_1+i\sigma_2}{2}\right)\right\}\wtilde{D}_n\right].
\end{split}
\end{equation}
Consider the relations
\begin{equation}\label{eq:ppty-pot}
\begin{split}
&\wtilde{U}_{n+c_i}=\sigma_1\wtilde{U}^{\tp}_{n+c_i}\sigma_1,\\
&\wtilde{D}_{n}    =(I_{s}\otimes\sigma_1)\wtilde{D}^{\tp}_{n}(I_{s}\otimes\sigma_1).
\end{split}
\end{equation}
An interesting consequence of this is
\begin{equation}
\begin{split}
\Delta_{n+1}
&=\det\left(I_{2s}-(I_{s}\otimes\sigma_1)(A\otimes\sigma_0)(I_{s}\otimes\sigma_1)\wtilde{D}^{\tp}_n\right)\\
&=\det\left(I_{2s}-(I_{s}\otimes\sigma_1)(A\otimes\sigma_1)\wtilde{D}^{\tp}_n\right)\\
&=\det\left(I_{2s}-(A\otimes\sigma_0)\wtilde{D}^{\tp}_n\right).
\end{split}
\end{equation}
This also turns out to be true of $\det\wtilde{\Gamma}$:
\begin{equation}
\det\wtilde{\Gamma}=\det\left(I_{2s}-\left\{(A-\mathbf{1}_{s}\vv{b}^{\tp})\otimes\sigma_0\right\}
\wtilde{D}^{\tp}_n\right).
\end{equation} 
Consider
\begin{equation}
\begin{split}
(I_{s}\otimes\sigma_1)\left\{(\mathbf{1}_{s}\vv{b}^{\tp})\otimes
\left(\frac{\sigma_0-\sigma_3}{2}\right)\right\}(I_{s}\otimes\sigma_1)
&=(\mathbf{1}_{s}\vv{b}^{\tp})\otimes\left\{
\sigma_1\left(\frac{\sigma_0-\sigma_3}{2}\right)\sigma_1\right\}\\
&=(\mathbf{1}_{s}\vv{b}^{\tp})\otimes\left(\frac{\sigma_0+\sigma_3}{2}\right),
\end{split}
\end{equation}
and,
\begin{equation}
\begin{split}
(I_{s}\otimes\sigma_1)\left\{(\mathbf{1}_{s}\vv{b}^{\tp})\otimes
\left(\frac{\sigma_1-i\sigma_2}{2}\right)\right\}(I_{s}\otimes\sigma_1)
&=(\mathbf{1}_{s}\vv{b}^{\tp})\otimes\left\{
\sigma_1\left(\frac{\sigma_1-i\sigma_2}{2}\right)\sigma_1\right\}\\
&=(\mathbf{1}_{s}\vv{b}^{\tp})\otimes\left(\frac{\sigma_1+i\sigma_2}{2}\right).
\end{split}
\end{equation}
These relations allow us to conclude:
\begin{lemma}
The transfer matrix $\wtilde{M}_{n+1}$ seen as a function of $\wtilde{D}_n$ from
a subspace of $\field{C}^{2s\times2s}$, which satisfies the
property~\eqref{eq:ppty-pot}, to $\field{C}^{2\times2}$ satisfies the following relations:
\begin{align*}
\wtilde{M}^{(n+1)}_{11}(\wtilde{D}_n)=&\wtilde{M}^{(n+1)}_{22}(\wtilde{D}^{\tp}_n),\\
\wtilde{M}^{(n+1)}_{12}(\wtilde{D}_n)=&\wtilde{M}^{(n+1)}_{21}(\wtilde{D}^{\tp}_n).
\end{align*}
\end{lemma}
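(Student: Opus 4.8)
The plan is to read each of the four entries $\wtilde{M}^{(n+1)}_{jk}$ as a single determinant (the diagonal entries) or a difference of two determinants (the off-diagonal entries) of a matrix built from $\wtilde{\Gamma}$ and $\wtilde{D}_n$, and then to exploit the elementary fact that a determinant is invariant under conjugation by an involution. Concretely, I would set $J=I_{s}\otimes\sigma_1$ and record the two structural facts about it: $J^2=I_{2s}$ (since $\sigma_1^2=\sigma_0$), so that $\det(JXJ)=\det X$ for every $X\in\field{C}^{2s\times2s}$; and $J\wtilde{D}_nJ=\wtilde{D}_n^{\tp}$, which is precisely the hypothesis~\eqref{eq:ppty-pot} defining the admissible subspace. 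The whole argument then reduces to showing that conjugation by $J$ carries the matrix defining $\wtilde{M}^{(n+1)}_{11}$ into the one defining $\wtilde{M}^{(n+1)}_{22}$ with $\wtilde{D}_n$ replaced by $\wtilde{D}_n^{\tp}$, and likewise for the off-diagonal pair.

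For the diagonal relation I would start from $\wtilde{M}^{(n+1)}_{11}(\wtilde{D}_n)=\det G_{11}$, where $G_{11}=\wtilde{\Gamma}-\{(\mathbf{1}_{s}\vv{b}^{\tp})\otimes(\tfrac{\sigma_0-\sigma_3}{2})\}\wtilde{D}_n$ and $\wtilde{\Gamma}=I_{2s}-\{(A-\mathbf{1}_{s}\vv{b}^{\tp})\otimes\sigma_0\}\wtilde{D}_n$. The key manipulation is to insert $J^2=I_{2s}$ between each coefficient block and the factor $\wtilde{D}_n$, so that $J(M\wtilde{D}_n)J=(JMJ)(J\wtilde{D}_nJ)=(JMJ)\wtilde{D}_n^{\tp}$. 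Using $\sigma_1\sigma_0\sigma_1=\sigma_0$ for the $\wtilde{\Gamma}$-part, together with the already-verified identity $\sigma_1(\tfrac{\sigma_0-\sigma_3}{2})\sigma_1=\tfrac{\sigma_0+\sigma_3}{2}$, one finds that $JG_{11}J$ is exactly the matrix whose determinant defines $\wtilde{M}^{(n+1)}_{22}(\wtilde{D}_n^{\tp})$. Taking determinants and invoking $\det(JG_{11}J)=\det G_{11}$ then gives the first claimed identity.

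The off-diagonal relation follows by the same route, now applied to the two determinants constituting $\wtilde{M}^{(n+1)}_{12}$. The leading term $\det\wtilde{\Gamma}$ is transpose-invariant — this was already established as $\det\wtilde{\Gamma}(\wtilde{D}_n)=\det\wtilde{\Gamma}(\wtilde{D}_n^{\tp})$ — so it matches the leading term of $\wtilde{M}^{(n+1)}_{21}(\wtilde{D}_n^{\tp})$. For the subtracted determinant I would conjugate by $J$ exactly as above, using this time $\sigma_1(\tfrac{\sigma_1-i\sigma_2}{2})\sigma_1=\tfrac{\sigma_1+i\sigma_2}{2}$; conjugation sends $\wtilde{\Gamma}(\wtilde{D}_n)\mapsto\wtilde{\Gamma}(\wtilde{D}_n^{\tp})$ and $(\mathbf{1}_{s}\vv{b}^{\tp})\otimes\tfrac{\sigma_1-i\sigma_2}{2}\mapsto(\mathbf{1}_{s}\vv{b}^{\tp})\otimes\tfrac{\sigma_1+i\sigma_2}{2}$, while turning the trailing $\wtilde{D}_n$ into $\wtilde{D}_n^{\tp}$. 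The resulting matrix is the argument of the second determinant in $\wtilde{M}^{(n+1)}_{21}(\wtilde{D}_n^{\tp})$, and determinant-invariance closes the argument.

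I do not anticipate a genuine obstacle here: the statement is essentially a symmetry bookkeeping result, and all the nontrivial algebraic identities (the two $\sigma_1$-conjugation relations and the transpose-invariance of $\Delta_{n+1}$ and $\det\wtilde{\Gamma}$) have already been assembled in the preceding displays. The only step requiring care is the distribution of the conjugation across the product $M\wtilde{D}_n$, which must be justified by the involution property $J^2=I_{2s}$ rather than taken for granted; once written out, the two identities drop out mechanically. A minor point worth stating explicitly is that $\wtilde{D}_n^{\tp}$ again lies in the admissible subspace, since $J\wtilde{D}_nJ=\wtilde{D}_n^{\tp}$ rearranges to the defining relation for $\wtilde{D}_n^{\tp}$, so that both sides of each identity are well defined as values of the same function $\wtilde{M}_{n+1}$.
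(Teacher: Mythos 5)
Your proof is correct and follows essentially the same route as the paper: the paper's own argument consists precisely of the displayed identities preceding the lemma (the involution property of $I_s\otimes\sigma_1$ acting via~\eqref{eq:ppty-pot}, the transpose-invariance of $\Delta_{n+1}$ and $\det\wtilde{\Gamma}$, and the two $\sigma_1$-conjugation relations swapping $\tfrac{\sigma_0\mp\sigma_3}{2}$ and $\tfrac{\sigma_1\mp i\sigma_2}{2}$), from which the lemma is concluded exactly as you do. Your write-up merely makes explicit the bookkeeping the paper leaves implicit, including the worthwhile observation that $\wtilde{D}_n^{\tp}$ remains in the admissible subspace.
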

This lemma may prove useful in verifying the output of any computer algebra
system that is used for computing the transfer matrix. 

Diagonally implicit RK methods are characterized by a lower triangular matrix $A$. In this 
case $\Delta_{n+1}$ can be computed explicitly:
\begin{equation}
\Delta_{n+1}=\prod_{k=1}^{s}(1+a_{kk}Q_{n+c_k}R_{n+c_k}).
\end{equation}
For any matrix $A$, let $\|A\|$ stand for the spectral norm unless otherwise
stated.
\begin{lemma}\label{lemma:Delta-zero-free}
Given samples of the potential, $q_{n+c_1}, q_{n+c_2},\ldots,q_{n+c_s}$, 
assumed to be bounded, there exits a constant $h_0>0$ such that 
\[
\Delta_{n+1}(\zeta)\neq0,\quad\forall\zeta\in\field{R},
\]
if $0\leq h\leq h_0$. 
\end{lemma}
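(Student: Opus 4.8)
The plan is to show that for real $\zeta$ the matrix $(A\otimes\sigma_0)\wtilde{D}_n$ has spectral norm strictly below one once $h$ is sufficiently small, which renders $I_{2s}-(A\otimes\sigma_0)\wtilde{D}_n$ nonsingular and hence forces $\Delta_{n+1}\neq0$. The crucial observation---and the sole place where the restriction $\zeta\in\field{R}$ enters---is that the modulating factors in $\wtilde{U}$ are unimodular on the real axis: for $\zeta\in\field{R}$ one has $|\wtilde{U}_{12}(t)|=|q(t)e^{2i\zeta t}|=|q(t)|$ and $|\wtilde{U}_{21}(t)|=|r(t)e^{-2i\zeta t}|=|r(t)|=|q(t)|$, the last equality coming from $r=\kappa q^*$. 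Since each block $\wtilde{U}_{n+c_k}$ is off-diagonal, its spectral norm is $\max(|\wtilde{U}_{12}|,|\wtilde{U}_{21}|)=|q(t_n+c_kh)|$, a quantity that is independent of $\zeta$ and bounded by the hypothesis on the samples.

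First I would fix a bound $M$ for the sampled magnitudes, so that $\|\wtilde{U}_{n+c_k}\|\leq M$ for every $k$ and every $\zeta\in\field{R}$; by the boundedness assumption $M$ may be taken independent of $h$ (for instance $M=\esssup_{t}|q(t)|$), which avoids any circularity with the choice of $h_0$. Because $\wtilde{D}_n=\diag(h\wtilde{U}_{n+c_1},\ldots,h\wtilde{U}_{n+c_s})$ is block-diagonal, its spectral norm equals the largest block norm, whence $\|\wtilde{D}_n\|\leq hM$. Using the multiplicativity of the spectral norm on Kronecker products together with $\sigma_0=I$, I would then estimate
\[
\|(A\otimes\sigma_0)\wtilde{D}_n\|\leq\|A\otimes\sigma_0\|\,\|\wtilde{D}_n\|
=\|A\|\,\|\wtilde{D}_n\|\leq hM\|A\|.
\]

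To finish I would pick any $h_0$ strictly below the critical value $(M\|A\|)^{-1}$ (the degenerate cases $M=0$ or $A=0$ being trivial, since then $\wtilde{D}_n=0$ and $\Delta_{n+1}=1$), so that $hM\|A\|\leq h_0M\|A\|<1$ on the whole closed interval $0\leq h\leq h_0$. The spectral radius of $(A\otimes\sigma_0)\wtilde{D}_n$ is then dominated by its spectral norm and is strictly less than one, so $1$ is not an eigenvalue and $I_{2s}-(A\otimes\sigma_0)\wtilde{D}_n$ is invertible; consequently $\Delta_{n+1}(\zeta)\neq0$ for all $\zeta\in\field{R}$. I do not anticipate a genuine obstacle: the entire substance of the lemma is the uniformity in $\zeta$, which the unimodularity of $e^{\pm2i\zeta t}$ delivers for free, after which a single Neumann-series (spectral-radius) step concludes the argument. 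The only points demanding care are bounding the Kronecker-product norm correctly and taking $h_0$ strictly below the threshold so that nonvanishing persists at the endpoint $h=h_0$.
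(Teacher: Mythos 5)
Your proposal is correct and follows essentially the same route as the paper: bound $\|(A\otimes\sigma_0)\wtilde{D}_n\|\leq\|A\|\|\wtilde{D}_n\|$, use the unimodularity of $e^{\pm 2i\zeta t}$ for real $\zeta$ (which the paper encodes via $\wtilde{D}_n\wtilde{D}_n^{\dagger}=\diag(|Q_{n+c_1}|^2,\ldots,|Q_{n+c_s}|^2)\otimes\sigma_0$) to get $\|\wtilde{D}_n\|\leq h\,\max_k|q_{n+c_k}|$, and choose $h_0$ below $(\omega\|A\|)^{-1}$ so the Neumann-series argument gives invertibility of $I_{2s}-(A\otimes\sigma_0)\wtilde{D}_n$. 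Your added remarks on the degenerate cases and the strictness at the endpoint are harmless refinements of the same argument.
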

\begin{proof}
In order to prove the lemma, it suffices to show that for some $h_0>0$, the
spectral norm of $(A\otimes\sigma_0)\wtilde{D}_n$ is less than unity for all
$h\leq h_0$ and $\zeta\in\field{R}$. To this end, consider
\[
\|(A\otimes\sigma_0)\wtilde{D}_n\|\leq\|A\|\|\wtilde{D}_n\|.
\]
Noting that
\[
\wtilde{D}_n\wtilde{D}_n^{\dagger}=\diag(|Q_{n+c_1}|^2,\ldots,|Q_{n+c_s}|^2)\otimes\sigma_0,
\]
it follows that $\|\wtilde{D}_n\|\leq\max_{1\leq k\leq s}|Q_{n+c_k}|$. If
$|q|<\omega$, we may write $\|\wtilde{D}_n\|<\omega h$ so that
\[
\|(A\otimes\sigma_0)\wtilde{D}_n\|< h (\omega\|A\|).
\]
The proof follows if we choose $h_0<(\omega\|A\|)^{-1}$.
\end{proof}
In fact, it is also possible to show that by choosing $h$
sufficiently small for given $\zeta\in\field{C}$, one can ensure that
\[
\Delta_{n+1}(\zeta)\neq0.
\]
We defer this discussion to later part of this section.
In order to study the stability of the method in the sense of~\cite{Gautschi2012} 
for fixed $\zeta\in\field{R}$, we write the 
update relation over one-step as
\begin{equation}\label{eq:one-step-RK}
\wtilde{\vv{v}}_{n+1}=\wtilde{\vv{v}}_{n}+h\Lambda_{n+1}\tilde{\vv{v}}_{n};
\end{equation}
then, the boundedness of the matrix
\begin{equation}
\Lambda_{n+1}(t_n;h)
=\frac{1}{h}(\vv{b}^{\tp}\otimes\sigma_0)
\wtilde{D}_n[I_{2s}-(A\otimes\sigma_0)\wtilde{D}_n]^{-1}(\mathbf{1}_s\otimes\sigma_0),
\end{equation}
as $h\rightarrow0$ is sufficient to guarantee the stability of this one-step 
method~\cite{Gautschi2012}. Observing that
\[
\|\Lambda_{n+1}\|\leq
\frac{\sqrt{s}\|\vv{b}\|_2\|\wtilde{D}_n\|/h}{1-\|A\|\|\wtilde{D}_n\|}
\leq\frac{\sqrt{s}\|\vv{b}\|_2\|\omega}{1-\omega h_0\|A\|},
\]
for all $h\leq h_0$ where $\omega$ and $h_0$ are as defined in the proof 
of Lemma~\ref{lemma:Delta-zero-free} so that
$\|\wtilde{D}_n\|<\omega h$. The method~\eqref{eq:one-step-RK} is also
\emph{consistent}, i.e., the truncation error goes to $0$ as $h\rightarrow0$,
follows from
\begin{equation*}
\Lambda_{n+1}(t_n;0)
=(\vv{b}^{\tp}\otimes\sigma_0)(\mathbf{1}_s\otimes\wtilde{U}_n)
=(\vv{b}^{\tp}\mathbf{1}_s)\otimes\wtilde{U}_n,
\end{equation*}
provided $(\vv{b}^{\tp}\mathbf{1}_s)=\sum_{j=1}^sb_j=1$. Consistency and
stability imply convergence; therefore, the method~\eqref{eq:one-step-RK} is
convergent under this condition.
\begin{prop}\label{prop:convg-zeta-real}
Let $q$ be bounded over its support which is assumed to be compact. Then, 
for fixed $\xi\in\field{R}$, the method defined by~\eqref{eq:one-step-RK} 
is convergent if $\sum_{j=1}^sb_j=1$.
\end{prop}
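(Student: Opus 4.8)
The plan is to recognize the statement as an instance of the classical convergence theorem for one-step methods (as formulated in~\cite{Gautschi2012}), namely that \emph{consistency together with stability implies convergence}, and to verify that its two hypotheses have already been secured by the discussion preceding the proposition. First I would cast the scheme in the canonical one-step form $\tilde{\vv{v}}_{n+1}=\tilde{\vv{v}}_n+h\Phi(t_n,\tilde{\vv{v}}_n;h)$ by reading off the increment function $\Phi(t_n,\tilde{\vv{v}}_n;h)=\Lambda_{n+1}(t_n;h)\tilde{\vv{v}}_n$ directly from~\eqref{eq:one-step-RK}. Because the support of $q$ is compact, the integration is carried out over a fixed finite interval $\Omega=[-T,T]$, so that $N\sim 2T/h$ steps are taken and $h\to0$ is the relevant limit. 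The restriction to real $\xi$ enters here in an essential way: only for $\Im\xi=0$ do the exponentials in $\wtilde{U}$ have unit modulus, so that the block norms $\|\wtilde{U}_{n+c_k}\|$ reduce to $|q(t_n+c_kh)|$ and stay uniformly bounded.

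The second step is to guarantee that $\Phi$ is well defined for all sufficiently small $h$. For fixed $\xi\in\field{R}$, Lemma~\ref{lemma:Delta-zero-free} supplies an $h_0>0$ such that $\Delta_{n+1}(\xi)\neq0$ whenever $h\leq h_0$; equivalently $\|(A\otimes\sigma_0)\wtilde{D}_n\|<1$, so the inverse $[I_{2s}-(A\otimes\sigma_0)\wtilde{D}_n]^{-1}$ exists and admits a convergent Neumann series. Since $\wtilde{D}_n=\bigO{h}$ (indeed $\wtilde{D}_n=h\,\diag(\wtilde{U}_{n+c_1},\ldots,\wtilde{U}_{n+c_s})$), the explicit factor $h^{-1}$ in $\Lambda_{n+1}$ cancels against this linear factor, and $\Lambda_{n+1}(t_n;h)$ becomes an analytic, hence continuous, matrix-valued function of $h$ on $[0,h_0]$ with a well-defined value at $h=0$.

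Third, I would verify the two structural hypotheses. Stability follows from the uniform bound $\|\Lambda_{n+1}\|\leq \sqrt{s}\,\|\vv{b}\|_2\,\omega/(1-\omega h_0\|A\|)$ established above for all $h\leq h_0$: because the increment function is linear in $\tilde{\vv{v}}_n$, this bound is precisely a Lipschitz constant in the solution argument that is uniform in $h$, which is exactly what the one-step stability theory demands. Consistency follows from the computed limit $\Lambda_{n+1}(t_n;0)=(\vv{b}^{\tp}\mathbf{1}_s)\otimes\wtilde{U}_n$; under the hypothesis $\sum_{j=1}^s b_j=\vv{b}^{\tp}\mathbf{1}_s=1$ this collapses to $\wtilde{U}_n$, so that $\Phi(t_n,\tilde{\vv{v}}_n;0)$ reproduces the right-hand side $\wtilde{U}\tilde{\vv{v}}$ of~\eqref{eq:exp-int}. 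Continuity of $\Lambda_{n+1}$ in $h$ (from the second step) then forces the local truncation error $\tau_n(h)=h^{-1}[\tilde{\vv{v}}(t_{n+1})-\tilde{\vv{v}}(t_n)]-\Lambda_{n+1}(t_n;h)\tilde{\vv{v}}(t_n)$ to vanish as $h\to0$, since its $h\to0$ limit is $\tilde{\vv{v}}_t(t_n)-\wtilde{U}_n\tilde{\vv{v}}(t_n)=0$.

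Finally, invoking the one-step convergence theorem of~\cite{Gautschi2012} with the uniform Lipschitz bound and the vanishing truncation error yields $\max_n\|\tilde{\vv{v}}_n-\tilde{\vv{v}}(t_n)\|\to0$ as $h\to0$, i.e.\ convergence. The only point requiring genuine care---and the step I expect to be the main obstacle---is the \emph{uniformity} of the estimates as $h\to0$: one must confirm that the bound on $\|\Lambda_{n+1}\|$ and the continuity in $h$ hold uniformly over all nodes $t_n\in\Omega$, so that the single constant $\omega$ and the single threshold $h_0$ of Lemma~\ref{lemma:Delta-zero-free} govern every step simultaneously. This is exactly where the compactness of $\supp q$, the boundedness of $q$, and the reality of $\xi$ are indispensable; once these furnish node-independent constants, the remainder is bookkeeping within the standard one-step framework.
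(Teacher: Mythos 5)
Your proposal is correct and follows essentially the same route as the paper: the paper likewise writes the scheme in the one-step form~\eqref{eq:one-step-RK}, bounds $\|\Lambda_{n+1}\|$ uniformly for $h\leq h_0$ using the constants $\omega$ and $h_0$ from Lemma~\ref{lemma:Delta-zero-free} (stability), computes $\Lambda_{n+1}(t_n;0)=(\vv{b}^{\tp}\mathbf{1}_s)\otimes\wtilde{U}_n$ to obtain consistency under $\sum_j b_j=1$, and then invokes the stability-plus-consistency-implies-convergence theorem of~\cite{Gautschi2012}. Your additional remarks on uniformity in the node index and continuity of $\Lambda_{n+1}$ in $h$ merely make explicit what the paper leaves implicit; there is no substantive difference.
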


The transfer matrix relation obtained thus far does not explicitly make it
evident that its entries are rationals. To this end, we carry out further
simplification so that the rational structure of the transfer matrix becomes
self-evident. Let
\begin{equation}
h\wtilde{U}_{n+c_k}=e^{i\sigma_3\zeta
t_n}h\mathcal{U}_{n+c_k}e^{-i\sigma_3\zeta t_n},
\end{equation}
and
\begin{equation}
D_n=\diag\left(h\mathcal{U}_{n+c_1},\ldots,h\mathcal{U}_{n+c_s}\right)\in\field{C}^{2s\times2s},
\end{equation}
so that
\begin{equation}
\wtilde{D}_n=(I_s\otimes e^{i\zeta t_n\sigma_3})D_n
(I_s\otimes e^{-i\zeta t_n\sigma_3}).
\end{equation}
The linear system in~\eqref{eq:linear-sys-RK} can now be written as
\begin{equation}\label{eq:linear-sys-RK-mod}
\begin{pmatrix}
I_{2s}-(A\otimes\sigma_0)D_n & 0_{2s\times2}\\
-(\vv{b}^{\tp}\otimes\sigma_0)D_n & \sigma_0
\end{pmatrix}
\begin{pmatrix}
\vs{\Upsilon}_n\\
e^{i\zeta h\sigma_3}\vv{v}_{n+1}
\end{pmatrix}\\
=\begin{pmatrix}
\mathbf{1}_{s}\otimes\vv{v}_{n}\\
\vv{v}_{n}
\end{pmatrix},
\end{equation}
where $\vs{\Upsilon}_n
=(I_s\otimes e^{-i\zeta t_n\sigma_3})\wtilde{\vs{\Upsilon}}_n$.
Define
\begin{equation}
{\Gamma}=I_{2s}-\left\{(A-\mathbf{1}_{s}\vv{b}^{\tp})\otimes\sigma_0\right\}{D}_n.
\end{equation} 
Then the transfer matrix relation in~\eqref{eq:ZS-TM0} can be stated as
\begin{equation}\label{eq:ZS-TM}
\vv{v}_{n+1}=\frac{e^{-i\zeta h}}{\Delta_{n+1}}M_{n+1}\vv{v}_n,
\end{equation}
where we note that
\begin{equation}
\Delta_{n+1}
=\det\left(I_{2s}-(A\otimes\sigma_0)\wtilde{D}_n\right)
=\det\left(I_{2s}-(A\otimes\sigma_0)D_n\right),
\end{equation}
and the diagonal elements of the transfer matrix are
\begin{equation}
\begin{split}
{M}^{(n+1)}_{11}
&=\det\left[{\Gamma}
-\left\{(\mathbf{1}_{s}\vv{b}^{\tp})\otimes
\left(\frac{\sigma_0-\sigma_3}{2}\right)\right\}{D}_n\right],\\
{M}^{(n+1)}_{22}
&=\det\left[{\Gamma}
-\left\{(\mathbf{1}_{s}\vv{b}^{\tp})\otimes
\left(\frac{\sigma_0+\sigma_3}{2}\right)\right\}{D}_n\right]e^{2i\zeta h},
\end{split}
\end{equation}
while the off-diagonal elements are given by
\begin{equation}
\begin{split}
{M}^{(n+1)}_{12}&=\det{\Gamma}
-\det\left[{\Gamma}
-\left\{(\mathbf{1}_{s}\vv{b}^{\tp})\otimes
\left(\frac{\sigma_1-i\sigma_2}{2}\right)\right\}{D}_n\right],\\
{M}^{(n+1)}_{21}&=
e^{2i\zeta h}\det{\Gamma}-e^{2i\zeta h}\det\left[{\Gamma}
-\left\{(\mathbf{1}_{s}\vv{b}^{\tp})
\otimes\left(\frac{\sigma_1+i\sigma_2}{2}\right)\right\}{D}_n\right].
\end{split}
\end{equation}
\begin{lemma}
Let $\kappa=-1$. Then, the transfer matrix relation between $\vv{v}_{n}(\zeta)$ and
$\vv{v}_{n+1}(\zeta)$ is identical to that between $\sigma_2\vv{v}^*_{n}(\zeta^*)$ and
$\sigma_2\vv{v}^*_{n+1}(\zeta^*)$. In other words, the transfer matrix
$M_{n+1}(\zeta)$ in~\eqref{eq:ZS-TM} satisfies the relation
\[
\sigma_2 M_{n+1}^*(\zeta^*)\sigma_2=e^{-2i\zeta h}M_{n+1}(\zeta),
\]
with
$\Delta^*_{n+1}(\zeta^*)=\Delta_{n+1}(\zeta)$. Further, for $\xi\in\field{R}$,
\[
\|M_{n+1}(\xi)\|=\frac{\sqrt{|\det M_{n+1}(\xi)|}}{|\Delta(\xi)|}.
\]
\end{lemma}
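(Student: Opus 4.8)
The plan is to trace all three assertions back to a single conjugation symmetry of the discretised potential that is special to $\kappa=-1$. Writing $R_{n+c_k}=-Q^{*}_{n+c_k}$ in the blocks of $\wtilde{D}_n$ and conjugating each block by $\sigma_2$, a direct computation (using $(e^{2i\zeta^{*}c_kh})^{*}=e^{-2i\zeta c_kh}$) gives, with $P:=I_s\otimes\sigma_2$,
\[
\sigma_2\wtilde{U}_{n+c_k}^{*}(\zeta^{*})\sigma_2=\wtilde{U}_{n+c_k}(\zeta),\qquad P\,\wtilde{D}_n^{*}(\zeta^{*})\,P=\wtilde{D}_n(\zeta),
\]
which is the $\kappa=-1$ counterpart of the transpose relation~\eqref{eq:ppty-pot}, with the pair (transpose, $\sigma_1$) replaced by (complex conjugation, $\sigma_2$). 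Throughout I would use $P^2=I_{2s}$, the reality of the Butcher arrays $A,\vv{b}$, the identities $\sigma_2\sigma_1\sigma_2=-\sigma_1$ and $\sigma_2\sigma_3\sigma_2=-\sigma_3$, and their corollary $\sigma_2 e^{\pm i\sigma_3\theta}\sigma_2=e^{\mp i\sigma_3\theta}$.

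I would settle the determinant identity first, since it is needed to strip off scalar prefactors later. As complex conjugation commutes with $\det$ and $A$ is real, $\Delta_{n+1}^{*}(\zeta^{*})=\det\!\left(I_{2s}-(A\otimes\sigma_0)\wtilde{D}_n^{*}(\zeta^{*})\right)$; substituting $\wtilde{D}_n^{*}(\zeta^{*})=P\wtilde{D}_n(\zeta)P$ and conjugating the whole matrix by $P$ (a similarity, hence $\det$-preserving, which sends $A\otimes\sigma_0$ to itself because $\sigma_2\sigma_0\sigma_2=\sigma_0$) returns $\det\!\left(I_{2s}-(A\otimes\sigma_0)\wtilde{D}_n(\zeta)\right)=\Delta_{n+1}(\zeta)$. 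This is exactly the manipulation already displayed in the excerpt for the transpose symmetry, with $P$ in the role of $I_s\otimes\sigma_1$ and conjugation in the role of transposition.

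For the transfer-matrix relation I would argue at the level of $\tilde{\vv{v}}$, where the symmetry carries no phase. Conjugating the stage and update equations at $\zeta^{*}$, multiplying on the left by $\sigma_2$, and inserting $P^2=I_{2s}$ shows (via $\sigma_2\wtilde{U}^{*}(\zeta^{*})\sigma_2=\wtilde{U}(\zeta)$ and the reality of $a_{jk},b_k$) that $\{\sigma_2\tilde{\vv{v}}_{n,j}^{*}(\zeta^{*}),\,\sigma_2\tilde{\vv{v}}_{n+1}^{*}(\zeta^{*})\}$ solve the same Runge--Kutta relations at $\zeta$; uniqueness of the stage solution for $h\le h_0$ (Lemma~\ref{lemma:Delta-zero-free}) together with $\Delta_{n+1}^{*}(\zeta^{*})=\Delta_{n+1}(\zeta)$ then yields $\sigma_2\wtilde{M}_{n+1}^{*}(\zeta^{*})\sigma_2=\wtilde{M}_{n+1}(\zeta)$. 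I would transport this to $\vv{v}$ through $\wtilde{M}_{n+1}=e^{-i\zeta h}e^{i\sigma_3\zeta t_{n+1}}M_{n+1}e^{-i\sigma_3\zeta t_n}$, read off from~\eqref{eq:ZS-TM0} and~\eqref{eq:ZS-TM}: applying $\sigma_2(\cdot)^{*}(\zeta^{*})\sigma_2$, the diagonal exponentials conjugate to their inverses and cancel from both sides, while the scalar $e^{-i\zeta h}$ becomes $e^{+i\zeta h}$, leaving precisely $\sigma_2 M_{n+1}^{*}(\zeta^{*})\sigma_2=e^{-2i\zeta h}M_{n+1}(\zeta)$. The main obstacle is this phase bookkeeping, and it is the sole source of the factor $e^{-2i\zeta h}$. (Alternatively one can verify the four entrywise relations directly from the determinant formulas for $M_{ij}^{(n+1)}$, again by conjugating the defining matrices with $P$; the diagonal entries match at once, whereas the off-diagonal entries additionally require that each perturbing term $G=\{(\mathbf{1}_s\vv{b}^{\tp})\otimes\tfrac{1}{2}(\sigma_1\pm i\sigma_2)\}D_n$ has rank one, so that $\det$ is affine along it and $\det(\Gamma+G)+\det(\Gamma-G)=2\det\Gamma$.)

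Finally, for the norm I would specialise to $\xi\in\field{R}$, where $\zeta^{*}=\zeta=\xi$ and the relation forces $M_{n+1}(\xi)=\bigl(\begin{smallmatrix}\alpha & -\omega\beta^{*}\\ \beta & \omega\alpha^{*}\end{smallmatrix}\bigr)$ with $\omega=e^{2i\xi h}$, $|\omega|=1$. A one-line computation then gives $M_{n+1}(\xi)M_{n+1}^{\dagger}(\xi)=(|\alpha|^{2}+|\beta|^{2})\sigma_0=|\det M_{n+1}(\xi)|\,\sigma_0$, so $M_{n+1}(\xi)$ is $\sqrt{|\det M_{n+1}(\xi)|}$ times a unitary matrix. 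Consequently the transfer matrix $e^{-i\xi h}M_{n+1}(\xi)/\Delta_{n+1}(\xi)$ of~\eqref{eq:ZS-TM} is $\sqrt{|\det M_{n+1}(\xi)|}/|\Delta_{n+1}(\xi)|$ times a unitary matrix, whence its spectral norm equals the asserted value.
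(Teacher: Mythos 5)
Your proposal is correct and follows essentially the same route as the paper's proof: the same $\kappa=-1$ conjugation symmetry $D_n^*(\zeta^*)=(I_s\otimes\sigma_2)D_n(\zeta)(I_s\otimes\sigma_2)$, the same similarity-invariance argument giving $\Delta^*_{n+1}(\zeta^*)=\Delta_{n+1}(\zeta)$, and the same conjugate-the-linear-system-and-invoke-uniqueness argument for the matrix relation (the paper applies it directly to the phase-extracted system~\eqref{eq:linear-sys-RK-mod}, where the factor $e^{-2i\zeta h}$ emerges from the $e^{i\zeta h\sigma_3}\vv{v}_{n+1}$ term, while you work at the $\tilde{\vv{v}}$ level and transport the phases afterwards --- a bookkeeping difference only, and your rank-one/affine-determinant aside parallels the paper's resolvent-form treatment of $M^{(n+1)}_{12}$). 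The one place you go beyond the paper is the norm identity, which the paper states but never proves: your derivation via the forced structure $M_{n+1}(\xi)=\bigl(\begin{smallmatrix}\alpha & -\omega\beta^*\\ \beta & \omega\alpha^*\end{smallmatrix}\bigr)$, $|\omega|=1$, so that $M_{n+1}M_{n+1}^{\dagger}=|\det M_{n+1}|\,\sigma_0$, is a correct and welcome completion.
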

\begin{proof}
For $\kappa=-1$, the symmetry property
\begin{equation*}
\begin{split}
h\mathcal{U}^*_{n+c_k}(\zeta^*)&=
\begin{pmatrix}
0 & -R_{n+c_k}e^{-2i\zeta c_kh}\\
-Q_{n+c_k}e^{2i\zeta c_kh}& 0
\end{pmatrix}\\
&=\sigma_2h\mathcal{U}_{n+c_k}(\zeta)\sigma_2,
\end{split}
\end{equation*}
allows us to conclude
\begin{equation}
{D}^*_{n}(\zeta^*)=(I_s\otimes\sigma_2){D}_{n}(\zeta)(I_s\otimes\sigma_2).
\end{equation}
Changing
$\zeta$ to $\zeta^*$ in~\eqref{eq:linear-sys-RK-mod} and taking the complex
conjugate of both sides of the equation, we have
\begin{align*}
&\left[I_{2s}-(A\otimes\sigma_0)(I_s\otimes\sigma_2){D}_{n}(\zeta)(I_s\otimes\sigma_2)\right]\vs{\Upsilon}^*_n
=\mathbf{1}_{s}\otimes\vv{v}^*_{n},\\
&\left[I_{2s}-(A\otimes\sigma_0){D}_{n}(\zeta)\right]\breve{\vs{\Upsilon}}_n
=\mathbf{1}_{s}\otimes(\sigma_ 2\vv{v}_{n}^*),
\end{align*}
where $\breve{\vs{\Upsilon}}_n=(I_s\otimes\sigma_2)\vs{\Upsilon}^*_n$. Further,
\[
-(\vv{b}^{\tp}\otimes\sigma_0)D_n(\zeta)\breve{\vs{\Upsilon}}_n
+e^{-i\zeta h\sigma_3}\sigma_2\vv{v}^*_{n+1}=\sigma_2\vv{v}^*_{n},
\]
shows that the linear system connecting $\sigma_2\vv{v}^*_{n+1}$ and
$\sigma_2\vv{v}^*_{n}$ with $\zeta^*$ as the spectral parameter is identical to
that between $\vv{v}_{n+1}$ and
$\vv{v}_{n}$ with $\zeta$ as the spectral parameter.

The foregoing conclusion can also be drawn from the structure of the transfer
matrix $M_{n+1}(\zeta)$. Consider
\begin{align*}
\Delta^*_{n+1}(\zeta^*)
&=\det\left(I_{2s}-(A\otimes\sigma_0)(I_s\otimes\sigma_2){D}_{n}(\zeta)(I_s\otimes\sigma_2)\right)\\
&=\det\left(I_{2s}-(I_s\otimes\sigma_2)(A\otimes\sigma_0)(I_s\otimes\sigma_2){D}_{n}(\zeta)\right)\\
&=\Delta_{n+1}(\zeta).
\end{align*}
Next,
\begin{align*}
{\Gamma}^*(\zeta^*)
&=I_{2s}-\left\{(A-\mathbf{1}_{s}\vv{b}^{\tp})\otimes\sigma_0\right\}
(I_s\otimes\sigma_2){D}_{n}(\zeta)(I_s\otimes\sigma_2)\\
&=(I_s\otimes\sigma_2)\Gamma(\zeta)(I_s\otimes\sigma_2),
\end{align*} 
which implies 
\[
M^{(n+1)*}_{11}(\zeta^*)=M^{(n+1)}_{22}(\zeta)e^{-2i\zeta h}.
\]
Now writing ${M}^{(n+1)}_{12}$ in the form 
\begin{equation*}
{M}^{(n+1)}_{12}(\zeta)=
(\det{\Gamma})(\vv{b}^{\tp}\otimes(1,0)){D}_n(\zeta)(\Gamma^{-1})(\vs{1}_s\otimes(0,1))
\end{equation*}
we have
\begin{equation*}
\begin{split}
{M}^{(n+1)*}_{12}(\zeta^*)
&=-(\det{\Gamma})(\vv{b}^{\tp}\otimes(0,1)){D}_n(\zeta)(\Gamma^{-1})(\vs{1}_s\otimes(1,0))\\
&=-{M}^{(n+1)}_{21}(\zeta)e^{-2i\zeta h}.
\end{split}
\end{equation*}

\end{proof}

\begin{lemma}
Let $\kappa=+1$. Then, the transfer matrix relation between $\vv{v}_{n}(\zeta)$ and
$\vv{v}_{n+1}(\zeta)$ is identical to that between $\sigma_1\vv{v}^*_{n}(\zeta^*)$ and
$\sigma_1\vv{v}^*_{n+1}(\zeta^*)$. In other words, the transfer matrix
$M_{n+1}(\zeta)$ in~\eqref{eq:ZS-TM} satisfies the relation
\[
\sigma_1 M_{n+1}^*(\zeta^*)\sigma_1=e^{-2i\zeta h}M_{n+1}(\zeta),
\]
with
$\Delta^*_{n+1}(\zeta^*)=\Delta_{n+1}(\zeta)$. Further, for $\xi\in\field{R}$,
\[
\|M_{n+1}(\xi)\|=\frac{|{M}^{(n+1)}_{11}(\xi)|+|{M}^{(n+1)}_{12}(\xi)|}{|\Delta(\xi)|}.
\]
\end{lemma}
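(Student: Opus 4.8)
The plan is to mirror the proof of the preceding lemma (the $\kappa=-1$ case) almost verbatim, replacing $\sigma_2$ by $\sigma_1$ throughout; the only genuinely new input is the symmetry carried by the restriction $r=\kappa q^*$ when $\kappa=+1$. First I would record the building block. Using the same explicit entries of $h\mathcal{U}_{n+c_k}$ as in the previous lemma together with the restriction $R_{n+c_k}=Q^*_{n+c_k}$ (which holds precisely because $\kappa=+1$), one finds
\[
h\mathcal{U}^*_{n+c_k}(\zeta^*)=\begin{pmatrix}0 & R_{n+c_k}e^{-2i\zeta c_kh}\\ Q_{n+c_k}e^{2i\zeta c_kh} & 0\end{pmatrix}=\sigma_1\,h\mathcal{U}_{n+c_k}(\zeta)\,\sigma_1,
\]
whence $D^*_n(\zeta^*)=(I_s\otimes\sigma_1)D_n(\zeta)(I_s\otimes\sigma_1)$. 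This is the exact analogue of the $\sigma_2$-conjugation used before, and it is the single place where the sign of $\kappa$ enters.

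Next I would repeat the linear-algebra argument on the stage block of~\eqref{eq:linear-sys-RK-mod}. Replacing $\zeta$ by $\zeta^*$, taking complex conjugates (all of $A$, $\vv{b}$, $\sigma_0$, $\sigma_3$ being real), substituting the identity for $D^*_n(\zeta^*)$, and using that $(A\otimes\sigma_0)$ commutes with $(I_s\otimes\sigma_1)$, I would left-multiply by $(I_s\otimes\sigma_1)$ and invoke $(I_s\otimes\sigma_1)^2=I_{2s}$ to obtain $[I_{2s}-(A\otimes\sigma_0)D_n(\zeta)]\breve{\vs{\Upsilon}}_n=\mathbf{1}_s\otimes(\sigma_1\vv{v}^*_n)$ with $\breve{\vs{\Upsilon}}_n=(I_s\otimes\sigma_1)\vs{\Upsilon}^*_n$. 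For the update row I would left-multiply by $\sigma_1$ and use the anticommutation $\sigma_1\sigma_3=-\sigma_3\sigma_1$, i.e.\ $\sigma_1 e^{-i\zeta h\sigma_3}=e^{i\zeta h\sigma_3}\sigma_1$, to recover exactly the update row in the spectral parameter $\zeta$ but with $\vv{v}_{n+1},\vv{v}_n$ replaced by $\sigma_1\vv{v}^*_{n+1},\sigma_1\vv{v}^*_n$. This proves the claimed equivalence of the two transfer-matrix relations, and hence $\sigma_1 M^*_{n+1}(\zeta^*)\sigma_1=e^{-2i\zeta h}M_{n+1}(\zeta)$ together with $\Delta^*_{n+1}(\zeta^*)=\Delta_{n+1}(\zeta)$.

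As a cross-check I would also derive the entrywise identities directly from the determinantal formulas for $M^{(n+1)}_{ij}$. Here $\Delta^*_{n+1}(\zeta^*)=\Delta_{n+1}(\zeta)$ and $\Gamma^*(\zeta^*)=(I_s\otimes\sigma_1)\Gamma(\zeta)(I_s\otimes\sigma_1)$ follow from similarity-invariance of the determinant; then $\sigma_1\left(\tfrac{\sigma_0-\sigma_3}{2}\right)\sigma_1=\tfrac{\sigma_0+\sigma_3}{2}$ gives $M^{(n+1)*}_{11}(\zeta^*)=e^{-2i\zeta h}M^{(n+1)}_{22}(\zeta)$, and writing $M^{(n+1)}_{12}$ in the rank-one form $(\det\Gamma)(\vv{b}^{\tp}\otimes(1,0))D_n\Gamma^{-1}(\mathbf{1}_s\otimes(0,1)^{\tp})$ and conjugating gives $M^{(n+1)*}_{12}(\zeta^*)=e^{-2i\zeta h}M^{(n+1)}_{21}(\zeta)$. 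I want to flag the one substantive difference from the $\kappa=-1$ case: because $\sigma_1$ permutes $(1,0)^{\tp}\leftrightarrow(0,1)^{\tp}$ with no factor of $i$ (whereas $\sigma_2$ produces $\pm i$), the off-diagonal identity carries \emph{no} minus sign. This is the only delicate point, and it is exactly what changes the final norm formula.

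Finally, for the spectral norm at real $\xi$ I would specialize the symmetry to $\zeta=\xi$, obtaining $M^{(n+1)}_{22}(\xi)=e^{2i\xi h}M^{(n+1)*}_{11}(\xi)$ and $M^{(n+1)}_{21}(\xi)=e^{2i\xi h}M^{(n+1)*}_{12}(\xi)$. Writing $\alpha=M^{(n+1)}_{11}$, $\beta=M^{(n+1)}_{12}$, $\gamma=e^{2i\xi h}$ with $|\gamma|=1$, the numerator matrix is $M_{n+1}=\left(\begin{smallmatrix}\alpha & \beta\\ \gamma\beta^* & \gamma\alpha^*\end{smallmatrix}\right)$, and a short computation gives
\[
M^{\dagger}_{n+1}M_{n+1}=\begin{pmatrix}|\alpha|^2+|\beta|^2 & 2\alpha^*\beta\\ 2\alpha\beta^* & |\alpha|^2+|\beta|^2\end{pmatrix},
\]
whose eigenvalues are $(|\alpha|\pm|\beta|)^2$. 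Hence the largest singular value of $M_{n+1}$ is $|\alpha|+|\beta|=|M^{(n+1)}_{11}(\xi)|+|M^{(n+1)}_{12}(\xi)|$; dividing by $|\Delta(\xi)|$ (the prefactor $e^{-i\xi h}$ in~\eqref{eq:ZS-TM} having unit modulus for real $\xi$) yields the stated formula for the one-step transfer map $\|M_{n+1}(\xi)\|$. I do not expect a genuine obstacle: the argument is a mechanical transcription of the $\kappa=-1$ proof, and the only thing to watch is the absence of the minus sign in the off-diagonal symmetry, which is precisely why here $M^{\dagger}_{n+1}M_{n+1}$ is not a scalar multiple of the identity and the norm becomes $(|M^{(n+1)}_{11}|+|M^{(n+1)}_{12}|)/|\Delta|$ rather than the $\sqrt{|\det M_{n+1}|}/|\Delta|$ obtained when $\kappa=-1$.
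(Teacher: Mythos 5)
Your proposal is correct and follows essentially the same route as the paper: the paper's own proof merely records the symmetry $h\mathcal{U}^*_{n+c_k}(\zeta^*)=\sigma_1\,h\mathcal{U}_{n+c_k}(\zeta)\,\sigma_1$, hence $D^*_n(\zeta^*)=(I_s\otimes\sigma_1)D_n(\zeta)(I_s\otimes\sigma_1)$, and then declares the remainder ``similar to the preceding lemma,'' which is exactly the $\sigma_2\to\sigma_1$ transcription (linear-system argument plus the determinantal/rank-one entrywise identities) that you carry out in full. Your explicit singular-value computation giving $\left(|M^{(n+1)}_{11}(\xi)|+|M^{(n+1)}_{12}(\xi)|\right)/|\Delta(\xi)|$, together with the observation that the absence of the minus sign in the off-diagonal symmetry is precisely what prevents $M^{\dagger}_{n+1}M_{n+1}$ from being a scalar matrix as in the $\kappa=-1$ case, correctly supplies the norm formula, a step the paper states but never proves in either lemma.
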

\begin{proof}
For $\kappa=+1$, the symmetry property
\begin{equation}
\begin{split}
h\mathcal{U}^*_{n+c_k}(\zeta^*)&=
\begin{pmatrix}
0 & R_{n+c_k}e^{-2i\zeta c_kh}\\
Q_{n+c_k}e^{2i\zeta c_kh}& 0
\end{pmatrix}
=\sigma_1h\mathcal{U}_{n+c_k}(\zeta)\sigma_1,
\end{split}
\end{equation}
allows us to conclude
\begin{equation}
{D}^*_{n}(\zeta^*)=(I_s\otimes\sigma_1){D}_{n}(\zeta)(I_s\otimes\sigma_1).
\end{equation}
The rest of the proof is similar to that of the preceding lemma, therefore, it
is being omitted for the sake of brevity of presentation.
\end{proof}
Now, we would like to address the stability of the method~\eqref{eq:one-step-RK} for fixed
$\zeta=\xi+i\eta\in\field{C}$. Rewrite the update relation as
\begin{equation}\label{eq:one-step-RK-complex}
{\vv{v}}_{n+1}=e^{-i\zeta h\sigma_3}
\left[{\vv{v}}_{n}+h\Lambda_{n+1}\vv{v}_{n}\right],
\end{equation}
where
\begin{equation}
\Lambda_{n+1}(t_n;h)
=\frac{1}{h}(\vv{b}^{\tp}\otimes\sigma_0)
{D}_n[I_{2s}-(A\otimes\sigma_0){D}_n]^{-1}(\mathbf{1}_s\otimes\sigma_0).
\end{equation}
Now, taking into account that $\|D_n\|\leq\omega he^{2|\eta|h}$ 
where $\omega$ is as defined in the proof 
of Lemma~\ref{lemma:Delta-zero-free} and choosing
$h_0<(2|\eta|+\omega\|A\|)^{-1}$, we have
\[
\|\Lambda_{n+1}\|\leq
\frac{\sqrt{s}\|\vv{b}\|_2\|{D}_n\|/h}{1-\|A\|\|{D}_n\|}
\leq\frac{\sqrt{s}\|\vv{b}\|_2\|\omega}{1-\omega h_0\|A\|},
\]
for all $h\leq h_0$. Under these conditions, we also have $\Delta_{n+1}(\zeta)\neq0$.
The proposition~\ref{prop:convg-zeta-real} can now be modified as
\begin{prop}\label{prop:convg-complex-real}
Let $q$ be bounded over its support which is assumed to be compact. Then, for 
fixed $\zeta\in\field{C}$ such that $|\Im\zeta|<\infty$, the method defined
by~\eqref{eq:one-step-RK-complex} is convergent if $\sum_{j=1}^sb_j=1$.
\end{prop}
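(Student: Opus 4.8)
The plan is to show that the complex case reduces, up to a uniformly bounded and invertible factor, to the real case already handled in Proposition~\ref{prop:convg-zeta-real}. The governing one-step map~\eqref{eq:one-step-RK-complex} differs from its real-$\zeta$ counterpart~\eqref{eq:one-step-RK} in two ways: the replacement of $\wtilde{D}_n$ by $D_n$ (which now carries factors $e^{\pm 2i\zeta c_k h}$ with $\zeta=\xi+i\eta$), and the prefactor $e^{-i\zeta h\sigma_3}$. Since $\fs{L}^{\infty}$-boundedness of $q$ over a compact support gives $\|D_n\|\leq \omega h e^{2|\eta|h}$ as already recorded in the excerpt, the entire analysis of the real case transfers once I verify that the consistency and stability constants remain finite as $h\to 0$ for each fixed $\eta$.

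First I would establish stability. Following Gautschi's framework~\cite{Gautschi2012}, it suffices to bound $\|\Lambda_{n+1}(t_n;h)\|$ uniformly for $0\leq h\leq h_0$. The excerpt already supplies the estimate
\[
\|\Lambda_{n+1}\|\leq
\frac{\sqrt{s}\,\|\vv{b}\|_2\,\omega}{1-\omega h_0\|A\|},
\]
valid once $h_0<(2|\eta|+\omega\|A\|)^{-1}$, which simultaneously guarantees $\|A\|\|D_n\|<1$ and hence $\Delta_{n+1}(\zeta)\neq 0$ (so the transfer-matrix relation~\eqref{eq:ZS-TM} is well defined). The only genuinely new ingredient is the prefactor $e^{-i\zeta h\sigma_3}$: I would write $e^{-i\zeta h\sigma_3}=I+(e^{-i\zeta h\sigma_3}-I)$ and note $\|e^{-i\zeta h\sigma_3}-I\|=\bigO{|\zeta|h}=\bigO{h}$ for fixed $\zeta$, so the augmented one-step increment is still of the form $\vv{v}_{n+1}=\vv{v}_n+h\,\Xi_{n+1}\vv{v}_n$ with $\Xi_{n+1}=\Lambda_{n+1}+h^{-1}(e^{-i\zeta h\sigma_3}-I)(I+h\Lambda_{n+1})$ remaining bounded as $h\to 0$. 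This keeps the scheme within the class of stable one-step methods.

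Next I would check consistency. As $h\to 0$ one has $D_n\to 0$, hence $\Lambda_{n+1}(t_n;0)=(\vv{b}^{\tp}\mathbf{1}_s)\otimes\wtilde{U}_n$ exactly as in the real case, while the extra prefactor contributes $\lim_{h\to 0}h^{-1}(e^{-i\zeta h\sigma_3}-I)=-i\zeta\sigma_3$. Summing, the limiting increment matrix is $-i\zeta\sigma_3+(\vv{b}^{\tp}\mathbf{1}_s)\wtilde U_n$, which under the condition $\sum_{j=1}^s b_j=1$ reproduces precisely the right-hand side operator $-i\zeta\sigma_3+U$ of the original system~\eqref{eq:ZS-prob} evaluated after the integrating-factor transformation; this is exactly the consistency requirement for~\eqref{eq:one-step-RK-complex}. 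Consistency together with the stability bound above then yields convergence by the standard Lax-type argument invoked for Proposition~\ref{prop:convg-zeta-real}.

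The main obstacle I anticipate is not any single estimate but the bookkeeping of the integrating-factor prefactor $e^{-i\zeta h\sigma_3}$, which must be folded into both the stability constant and the consistency limit without spoiling either; in particular one must confirm that $h^{-1}(e^{-i\zeta h\sigma_3}-I)$ stays bounded (it does, with limit $-i\zeta\sigma_3$) so that the scheme is not merely stable but consistent with the correct limiting operator. Once that term is controlled uniformly on $h\leq h_0$ for each fixed $\zeta$ with $|\Im\zeta|<\infty$, the conclusion $\sum_{j=1}^s b_j=1\Rightarrow$ convergence follows verbatim from the real-axis argument, completing the proof.
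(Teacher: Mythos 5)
Your proposal is correct and takes essentially the same route as the paper: the same estimate $\|D_n\|\leq\omega h e^{2|\eta|h}$ with the choice $h_0<(2|\eta|+\omega\|A\|)^{-1}$, yielding boundedness of $\Lambda_{n+1}$ (stability) together with $\Delta_{n+1}(\zeta)\neq0$, and consistency under $\sum_{j=1}^s b_j=1$, from which convergence follows as in Proposition~\ref{prop:convg-zeta-real}. Your explicit bookkeeping of the prefactor $e^{-i\zeta h\sigma_3}$ (folding it into a bounded increment matrix with limit $-i\zeta\sigma_3$) merely spells out a step the paper leaves implicit, since that factor is the exact propagator of the linear part.
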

These observations lead to the conclusion that in order to evaluate the Jost solutions at
any point $\zeta$ in the complex plane, the step size must be chosen to be much
smaller than that in the case of any point $\xi$ on the real axis. The determination of 
the discrete spectrum is, therefore, numerically more challenging than that of
the continuous spectrum.

Next we look at the recurrence relation for the Wronskian for real values of the
spectral parameter. To this end, we set $\zeta=\xi\in\field{R}$. The transfer matrix
relation~\eqref{eq:ZS-TM} can also be written for the matrix solution of the ZS
problem. Let $v_{n}=(\vs{\phi}_n,\vs{\psi}_n)$ so that
\begin{equation}\label{eq:mat-ZS-TM}
{v}_{n+1}=\frac{e^{-i\xi h}}{\Delta_{n+1}}M_{n+1}{v}_n.
\end{equation}
The recurrence relation for the Wronskian then works out to be
\begin{equation}
W_{n+1}=\frac{e^{-2i\xi h}}{\Delta^2_{n+1}(\xi)}\det(M_{n+1})W_n
=\frac{|M^{(n+1)}_{11}(\xi)|^2-\kappa|M^{(n+1)}_{12}(\xi)|^2}{\Delta^2_{n+1}(\xi)}W_n.
\end{equation}
Given that $\vs{\phi}_n$ and $\vs{\psi}_n$ are linearly independent solutions,
their Wronskian must not be zero. For sufficiently small $h$, one can ensure that
$|M^{(n+1)}_{11}(\xi)|^2-\kappa|M^{(n+1)}_{12}(\xi)|^2\neq0$ and
$\Delta^2_{n+1}(\xi)\neq0$. For $\kappa=-1$, we have 
$|M^{(n+1)}_{11}(\xi)|^2-\kappa|M^{(n+1)}_{12}(\xi)|^2\neq0$ for all values of
$h$.

Finally, let us show that the entries of the transfer matrix are rationals.
First, let us observe that
\begin{equation}
h\mathcal{U}_{n+c_k}=\begin{pmatrix}
0 & Q_{n+c_k}e^{2i\zeta c_kh}\\
R_{n+c_k}e^{-2i\zeta c_kh}& 0
\end{pmatrix}.
\end{equation}
Let $\nu\in\field{Z}_+$ be such that $c_k=n_k/\nu$ where $n_k\in\field{Z}_+\cup\{0\}$. Then
putting $z=\exp(i\zeta h/\nu)$, we have
\begin{equation}
h\mathcal{U}_{n+c_k}(z^2)=
\begin{pmatrix}
0 & Q_{n+c_k}z^{2n_k}\\
R_{n+c_k}z^{-2n_k}& 0
\end{pmatrix},
\end{equation}
so that $D_n=D_n(z^2)$ with finite powers of $z^2$. This shows that $M_{n+1}=M_{n+1}(z^2)$ and
$\Delta_{n+1}=\Delta_{n+1}(z^2)$ with finite powers of $z^2$.

\subsubsection{Computation of discrete scattering coefficients}
Let the computational domain be $\Omega=[T_1,T_2]$ and set 
\begin{equation}
\left\{\begin{aligned}
&2T=T_2-T_1,\\
&2\ovl{T}= T_1+T2. 
\end{aligned}\right.
\end{equation}
Let the potential $q$ be supported in $\Omega$ or it is
assumed to periodic with period $2T$. Let the number
of of segments be $N_{\text{seg.}}$ so that the number of samples of the
potential becomes $N=\nu N_{\text{seg.}}+1$ (here $\nu+1$ is 
the number of distinct nodes within $[0,1]$) and $h=2T/N_{\text{seg.}}$. Consider 
the Jost solution $\phi(t;\zeta)$:
\begin{equation}\label{eq:phi-init-final}
\vs{\phi}(T_1;\zeta)=
\begin{pmatrix}
1\\
0
\end{pmatrix}e^{-i\zeta T_1},\quad
{\vs{\phi}}(T_2;\zeta)=
\begin{pmatrix}
a(\zeta)e^{-i\zeta T_2}\\
b(\zeta)e^{+i\zeta T_2}
\end{pmatrix},
\end{equation}
Let the cumulative transfer matrix be given by
\begin{equation}
M_{\text{cum..}}
=
\begin{pmatrix}
M^{(\text{cum..})}_{11} & M^{(\text{cum.})}_{12}\\
M^{(\text{cum.})}_{21} & M^{(\text{cum.})}_{22}
\end{pmatrix}
=e^{-2i\zeta T}\frac{M_{N_{\text{seg.}}}\times\ldots\times M_{1}}
{\Delta_{N_{\text{seg.}}}\times\ldots\times \Delta_{1}},
\end{equation}
then, the discrete scattering coefficients $a$ and $b$ are given by
\begin{equation}
\begin{split}
&a_N(z^2) = (z^2)^{\nu\ell}M^{(\text{cum.})}_{11}(z^2),\\
&b_N(z^2) = (z^2)^{-\nu\ovl{\ell}}M^{(\text{cum.})}_{21}(z^2),
\end{split}
\end{equation}
defined for $\Re\zeta\in[-\pi/2h,\pi/2h]$ where $h\ell\nu =T$, 
$h\ovl{\ell}\nu =\ovl{T}$ and $z=e^{i\zeta h/\nu}$. Also, let $h\ell_+=T_2$ and $h\ell_-=-T_1$ so that
\begin{equation}
\left\{\begin{aligned}
&2\ell\nu=\ell_++\ell_-,\\
&2\ovl{\ell}\nu=\ell_+-\ell_-.
\end{aligned}\right.
\end{equation}
The notation above is
consistent with the fact that the transfer matrices have entries that are rational 
functions of $z^2$ .

Let $N_{\text{mat.}}$ be the number of matrices and let $m$ be the number of
coefficients of the polynomials involved in the rational entries of each of the
matrices. In order to use FFT-based fast
polynomial arithmetic~\cite{Henrici1993} to compute the cumulative transfer
matrix, $N_{\text{mat.}}$ must be a power of $2$ and the polynomials involved in
the rational entries must have number of coefficients $m$ that is a power of $2$.
Note that by introducing dummy transfer matrices (which can be the identity matrix)
one can easily choose $N_{\text{mat.}}$ to be a power of $2$. The number of
coefficients $m$ can also be chosen freely by introducing dummy coefficients which
are identically zero. Let $\ovl{N}=mN_{\text{mat.}}$. The complexity of computing the
polynomial coefficients of the discrete scattering coefficient can be worked out
to be $\bigO{\ovl{N}\log^2\ovl{N}}$. The evaluation of the
scattering coefficient at $N'\geq \ovl{N}$ (where $N'$ is a power of $2$) equispaced points 
amounts to an FFT operation; therefore, the complexity of computing the
reflection coefficient works out to be $\bigO{N'\log N'}$. 

In order to represent the Jost solutions, we introduce the
polynomial vector
\begin{equation}\label{eq:poly-vec-rk}
\vv{P}_n(z^2)
=\begin{pmatrix}
P^{(n)}_{1}(z^2)\\
P^{(n)}_{2}(z^2)
\end{pmatrix}
=\sum_{j=0}^{n}
\vv{P}^{(n)}_{j}(z^2)^{j}
=\sum_{j=0}^{n}
\begin{pmatrix}
P^{(n)}_{1,j}\\
P^{(n)}_{2,j}
\end{pmatrix}(z^2)^{j},
\end{equation}
and the polynomial 
\begin{equation}
D_n(z^2) = \sum_{j=0}^{n}D^{(n)}_j (z^2)^j.
\end{equation}
For the methods considered in this paper, the form of the discrete scattering
coefficients can be discussed as follows: Let $M_{n}(z^2)$ have polynomial entries 
of degree $(z^2)^{\nu}$ and
\begin{equation}
\vv{P}_{n+1}(z^2)=\frac{1}{\Theta_{n+1}}M_{n+1}(z^2)\vv{P}_{n}(z^2),
\end{equation}
where $\Theta_{n}$ is defined below.
\begin{itemize}
\item Let $\Delta_n(z^2)z^{-\nu}$ be a polynomial of degree
$(z^2)^{\nu}$ whose constant term is $\Theta_{n}$ so that
\begin{equation}
D_{n+1}(z^2)=\frac{z^{-\nu}}{\Theta_{n+1}}\Delta_{n+1}(z^2)D_{n}(z^2);
\end{equation}
then,
\begin{equation}
\begin{split}
&a_N(z^2) = (z^2)^{\nu\ell}\frac{P^{(\nu N_{\text{seg.}})}_1(z^2)}{D_{\nu N_{\text{seg.}}}(z^2)},\\
&b_N(z^2) = (z^2)^{-\nu\ovl{\ell}}\frac{P_2^{(\nu N_{\text{seg.}})}(z^2)}{D_{\nu N_{\text{seg.}}}(z^2)},
\end{split}
\end{equation}
where we recall $N=\nu N_{\text{seg.}} + 1$. By introducing a dummy coefficient, it
is also justified to write
\begin{equation}
\begin{split}
&a_N(z^2) = (z^2)^{\nu\ell}\frac{P^{(N)}_1(z^2)}{D_{N}(z^2)},\\
&b_N(z^2) = (z^2)^{-\nu\ovl{\ell}}\frac{P_2^{(N)}(z^2)}{D_{N}(z^2)},
\end{split}
\end{equation}
with the domain of definition restricted by $\Re\zeta\in[-\pi/2h,\pi/2h]$.
\item Let $\Delta_n(z^2)=\Theta_n$ which is independent of $z$; then
\begin{equation}
\begin{split}
&a_N(z^2) = {P^{(N)}_1(z^2)},\\
&b_N(z^2) = (z^2)^{-\nu\ell_+}{P_2^{(N)}(z^2)}.
\end{split}
\end{equation}

\end{itemize}

The computation of the discrete eigenvalues requires finding all the zeros
of $a_N(z^2)$ in $\field{C}_+$. We do not address this problem in this paper
and refer the reader to the work of Henrici~\cite{Henrici1993} and, more
recent, Van Barel~\et~\cite{VVVF2010}, Kravanja and Van Barel~\cite{KB2000}. We will instead assume that the
eigenvalues are known by design\footnote{Let us remark that the fastest
non-iterative method available for computing the zeros of a polynomial requires
$\bigO{N^2_{\text{deg.}}}$ operations 
(where $N_{\text{deg.}}$ is the degree of the polynomial) and it is based on finding the eigenvalues
of a companion matrix.}. The method of computation of the norming
constant is adapted from~\cite{V2017INFT1} which introduces an
additional complexity of $\bigO{KN_{\text{seg.}}}$ or, equivalently, $\bigO{KN}$ 
where $K$ is the number of eigenvalues.

Therefore, excluding the cost of computing the discrete eigenvalues, the
total complexity of computing the nonlinear Fourier spectrum to be
$\bigO{N(K+\log^2N)}$.

\subsubsection{One-stage methods}
The simplest example of a one-stage method which achieves second order of
convergence is the \emph{implicit midpoint} (IM) method:
\begin{equation}
\renewcommand\arraystretch{1.2}
\begin{array}
{c|r}
1/2 & {1}/{2}\\
\hline
  & {1}
\end{array}
\end{equation}
In the expanded form, we have the system of equations
\begin{align*}
&\tilde{\vv{v}}_{n,1} = \tilde{\vv{v}}_{n}   
+(h/2)\wtilde{U}_{n+1/2}  \tilde{\vv{v}}_{n,1},\\
&\tilde{\vv{v}}_{n+1} = \tilde{\vv{v}}_{n}   
+h\wtilde{U}_{n+1/2}  \tilde{\vv{v}}_{n,1}.
\end{align*}
Solving this linear system, we have
\begin{equation}
\tilde{\vv{v}}_{n+1}\\
=\left[\sigma_0-(h/2)\wtilde{U}_{n+1/2}\right]^{-1}
\left[\sigma_0+(h/2)\wtilde{U}_{n+1/2}\right]\tilde{\vv{v}}_n.
\end{equation}
Putting $\Theta_{n}=1-\frac{1}{4}Q_nR_n$ and introducing $z=\exp(i\zeta h)$, the 
above expression can be written as
\begin{equation}\label{eq:TM-IM}
\vv{v}_{n+1}=\left(\frac{2-\Theta_{n+1/2}}{\Theta_{n+1/2}}\right)
\begin{pmatrix}
z^{-1}&G_{n+1/2}\\
H_{n+1/2}& z
\end{pmatrix}\vv{v}_{n}
\end{equation}
where
\begin{equation}
G_{n+1/2}=\frac{Q_{n+1/2}}{2-\Theta_{n+1/2}}, \quad H_{n+1/2}=\kappa G^*_{n+1/2}.
\end{equation}
This method resembles the split-Magnus method~\cite{V2017INFT1} and 
it also turns out to be a simplectic method. The
\emph{layer-peeling} formula for this system in terms of the potentials $G$ and
$H$ is identical to that of the split-Magnus method. If $H_{n+1/2}$ is known, the potential
$R_{n+1/2}$ can recovered as
\begin{equation}
R_{n+1/2} = \frac{H_{n+1/2}}{1+\sqrt{1-\kappa|H_{n+1/2}|^2}}.
\end{equation}
It is worth noting that the discrete system based on (exponential) trapezoidal rule presented
in~\cite{V2017INFT1} can also be transformed into a form similar 
to~\eqref{eq:TM-IM} (see~\cite{V2018DDT}). This method also belongs to the class
of Ablowitz-Ladik problems and admits of a discrete Darboux transformation~\cite{V2018DDT}.
 
By employing the transformation $\vv{w}_{n} = e^{i\zeta\sigma_3h/2}\vv{v}_n$,
we obtain
\begin{equation}
\begin{split}
\vv{w}_{n+1}&=
z^{-1}\left(\frac{2-\Theta_{n+1/2}}{\Theta_{n+1/2}}\right)
\begin{pmatrix}
1& z^2G_{n+1/2}\\
H_{n+1/2}& z^2
\end{pmatrix}\vv{w}_n\\
&=z^{-1}M_{n+1}(z^2)\vv{w}_n,
\end{split}
\end{equation}
so that the scattering coefficients can be represented as
\begin{equation}
\begin{split}
& a_{N}(z^2)={P}^{(N)}_1(z^2),\\
& b_{N}(z)=z^{-2\ell_{+}+1}{P}^{(N)}_2(z^2),
\end{split}
\end{equation}
where
\begin{equation}
\vv{P}_{n+1}(z^2)=\left(\frac{2-\Theta_{n+1/2}}{\Theta_{n+1/2}}\right)
M_{n+1}(z^2)\vv{P}_{n}(z^2).
\end{equation}

\begin{rem}[A modified split-Magnus method]
Let us note that a modified form of the split-Magnus method can be 
derived which turns out to be identical to the 
implicit midpoint method described above. Following~\cite{V2017INFT1} and
applying Magnus method with one-point Gauss quadrature to the original 
ZS-problem in~\eqref{eq:ZS-prob}, we obtain
\begin{equation*}
\vv{v}_{n+1}=\exp\left(-i\zeta\sigma_3h+hU_{n+\frac{1}{2}}\right)\vv{v}_n.
\end{equation*}
Splitting the exponential operator as
\begin{equation*}
\exp\left(-i\zeta\sigma_3+hU_{n+\frac{1}{2}}\right)
=\exp\left(-\frac{1}{2}i\zeta\sigma_3h\right)
\exp\left(hU_{n+\frac{1}{2}}\right)
\exp\left(-\frac{1}{2}i\zeta\sigma_3h\right)
+ \bigO{h^3},
\end{equation*}
and setting 
$2\Gamma =\sqrt{Q_{n+\frac{1}{2}}R_{n+\frac{1}{2}}}$, we have
\begin{align*}
\exp\left(hU_{n+\frac{1}{2}}\right)
&=\begin{pmatrix}
\cosh2\Gamma & Q_{n+\frac{1}{2}}\frac{\sinh2\Gamma}{2\Gamma}\\
R_{n+\frac{1}{2}}\frac{\sinh2\Gamma}{2\Gamma} & \cosh2\Gamma\\
\end{pmatrix}\\
&=\left(\frac{1+\tanh^2{\Gamma}}{1-\tanh^2{\Gamma}}\right)
\begin{pmatrix}
1 &
Q_{n+\frac{1}{2}}\frac{(\tanh\Gamma)/\Gamma}{1+\tanh^2{\Gamma}}\\
R_{n+\frac{1}{2}}\frac{(\tanh\Gamma)/\Gamma}{1+\tanh^2{\Gamma}} & 1\\
\end{pmatrix}\\
&=\left(\frac{1+\Gamma^2}{1-\Gamma^2}\right)
\begin{pmatrix}
1& \frac{Q_{n+\frac{1}{2}}}{1+\Gamma^2}\\
\frac{R_{n+\frac{1}{2}}}{1+\Gamma^2} & 1\\
\end{pmatrix}+\bigO{h^3}.
\end{align*}
From here it is evident that the resulting transfer matrix 
becomes identical to that of the implicit midpoint method.
\end{rem}

\subsubsection{Two-stage methods}
We consider two examples of a two-stage, second order, method which are diagonally
implicit. The Butcher tableau for the two-stage
Lobatto IIIA and Lobatto IIIB methods are
given by
\begin{equation}
\renewcommand\arraystretch{1.2}
\begin{array}
{c|rr}
0 &       0 &        0 \\
1 & {1}/{2} &  {1}/{2} \\
\hline
  & {1}/{2} &  {1}/{2} 
\end{array}
\qquad
\begin{array}
{c|rr}
0 &  1/2    &  0  \\
1 & {1}/{2} &  0 \\
\hline
  & {1}/{2}   &  {1}/{2} 
\end{array}
\end{equation}
respectively. Setting $z=\exp(i\zeta h)$, each of these methods yield a 
transfer matrix relation given by
\begin{equation}\label{eq:TM-relation-RK2}
{\vv{v}}_{n+1}=\frac{z^{-1}}{\Delta_{n+1}}M_{n+1}(z^2)\vv{v}_n,
\end{equation}
where 
\begin{equation}\label{eq:two-stage-TM}
M_{n+1}(z^2)=
\begin{pmatrix}
1+\frac{1}{4}Q_{n+1}R_{n}z^2 & \frac{1}{2}Q_{n}+\frac{1}{2}Q_{n+1}z^2\\
\frac{1}{2}R_{n}z^2+\frac{1}{2}R_{n+1} & z^2+\frac{1}{4}Q_{n}R_{n+1}
\end{pmatrix},
\end{equation}
while $\Delta_{n+1}=1-\frac{1}{4}Q_{n+1}R_{n+1}$ for the Lobatto IIIA and 
$\Delta_{n+1}=1-\frac{1}{4}Q_{n}R_{n}$ for Lobatto IIIB method. 

\subsubsection{Three-stage methods}
The Butcher tableau for Kutta's third order method, which is an explicit method, reads as
\begin{equation}
\renewcommand\arraystretch{1.2}
\begin{array}
{c|rrr}
0   & 0 & 0 &0\\
1/2 & {1}/{2} &  0 & 0\\
1   & -1      &  2 & 0\\
\hline
  & {1}/{6}   &  {2}/{3} & 1/6
\end{array}
\end{equation}
Setting $z=\exp(i\zeta h/2)$, this method simplifies to
\begin{equation}\label{eq:Kutta3}
{\vv{v}}_{n+1}=z^{-2}M_{n+1}(z^2){\vv{v}}_{n},
\end{equation}
where entries of the transfer matrix are given by
\begin{align*}
M^{(n+1)}_{11}(z^2) & = 1 
+ \frac{1}{3}\left(Q_{n+1/2}R_n + Q_{n+1}R_{n+1/2}\right)z^2-\frac{1}{6}Q_{n+1}R_nz^4,\\
M^{(n+1)}_{22}(z^2) &=z^4
+\frac{1}{3}\left(Q_nR_{n+1/2} + Q_{n+1/2}R_{n+1}\right)z^2-\frac{1}{6}Q_nR_{n+1},\\
M^{(n+1)}_{12}(z^2) &= \frac{1}{6}Q_n+\frac{1}{6}Q_{n+1}z^4
+\left(\frac{2}{3}Q_{n+1/2}+\frac{1}{6}Q_nQ_{n+1}R_{n+1/2}\right)z^2,\\
M^{(n+1)}_{21}(z^2) &= \frac{1}{6}R_{n+1}+\frac{1}{6}R_nz^4
+\left(\frac{2}{3}R_{n+1/2}+\frac{1}{6}Q_{n+1/2}R_nR_{n+1}\right)z^2.
\end{align*}
For testing purposes, we label this method as ``RK3''.

The next two examples are implicit methods based on $3$-point Lobatto
quadrature: The Butcher tableau for the Lobatto~IIIA method of order $4$ reads as
\begin{equation}
\renewcommand\arraystretch{1.2}
\begin{array}
{c|rrr}
0   & 0 & 0 &0\\
1/2 & {5}/{24} & {1}/{3} &-1/24\\
1   & {1}/{6} &  {2}/{3} &1/6\\
\hline
  & {1}/{6}   &  {2}/{3} & 1/6
\end{array}
\end{equation}
Again, setting $z=\exp(i\zeta h/2)$, the method simplifies to
\begin{equation}\label{eq:LobattoIIIA4}
{\vv{v}}_{n+1}=\frac{z^{-2}}{\Delta_{n+1}(z^2)}M_{n+1}(z^2){\vv{v}}_{n},
\end{equation}
where
\begin{multline}\label{eq:LIIIA3-TM}
M_{n+1}(z^2)
=\begin{pmatrix}
1+\frac{1}{12}z^2Q_{n+1}R_{n+1/2} &\frac{1}{6}z^2Q_{n+1}+\frac{1}{3}Q_{n+1/2}\\
\frac{1}{6}R_{n+1}+\frac{1}{3}z^2R_{n+1/2}& z^2+\frac{1}{12}R_{n+1}Q_{n+1/2}
\end{pmatrix}\times\\
\begin{pmatrix}
1+\frac{1}{12}z^2R_nQ_{n+1/2}&\frac{1}{6}Q_{n}+\frac{1}{3}z^2Q_{n+1/2}\\
\frac{1}{6}z^2R_{n}+\frac{1}{3}R_{n+1/2}&z^2+\frac{1}{12}Q_nR_{n+1/2}
\end{pmatrix},
\end{multline}
and
\begin{multline}
\Delta_{n+1}(z^2)
=\left(1+\frac{z^{-2}}{12}R_{n+1}Q_{n+1/2}\right)
\left(1+\frac{z^2}{12}Q_{n+1}R_{n+1/2}\right)\\
-\frac{1}{36}\left(Q_{n+1}+2{z^{-2}}Q_{n+1/2}\right)
\left(R_{n+1}+2{z^2}R_{n+1/2}\right).
\end{multline}
The Lobatto IIIB method of order 4 reads as
\begin{equation}
\renewcommand\arraystretch{1.2}
\begin{array}
{c|rrr}
0   & {1}/{6} & -{1}/{6} &0\\
1/2 & {1}/{6} &  {1}/{3} &0\\
1   & {1}/{6} &  {5}/{6} &0\\
\hline
  & {1}/{6}   &  {2}/{3} & 1/6
\end{array}
\end{equation}
This method simplifies to a form that is identical to~\eqref{eq:LobattoIIIA4} with
the transfer matrix given by~\eqref{eq:LIIIA3-TM} and 
\begin{multline}
\Delta_{n+1}(z^2)
=\left(1+\frac{z^{-2}}{12}Q_{n}R_{n+1/2}\right)
\left(1+\frac{z^2}{12}Q_{n+1/2}R_{n}\right)\\
-\frac{1}{36}\left(Q_{n}+2{z^2}Q_{n+1/2}\right)
\left(R_{n}+2z^{-2}R_{n+1/2}\right).
\end{multline}

\subsubsection{Four stage methods and above}
The fourth order classical RK method is given by
\begin{equation}
\renewcommand\arraystretch{1.2}
\begin{array}
{c|rrrr}
0   & 0       &  0      & 0 & 0\\
1/2 & {1}/{2} &  0      & 0 & 0\\
1/2 & 0       & {1}/{2} & 0 & 0\\
1   & 0       &  0      & 1 & 0\\
\hline
    & {1}/{6} & {1}/{3} & {1}/{3} & 1/6
\end{array}
\end{equation}
Setting $z=\exp(i\zeta h/2)$, this method simplifies to
\begin{equation}\label{eq:RK4}
{\vv{v}}_{n+1}=z^{-2}M_{n+1}(z^2){\vv{v}}_{n},
\end{equation}
where the entries of the transfer matrix are given by
\begin{equation*}
\begin{split}
M^{(n+1)}_{11}(z^2)&= 1+\frac{1}{6}Q_{n+1/2}R_{n+1/2}
+\frac{1}{6}\left(Q_{n+1/2}R_{n}+Q_{n+1}R_{n+1/2}\right)z^2\\
&\qquad+\frac{1}{24}Q_{n+1/2}Q_{n+1}R_nR_{n+1/2}z^4,\\
M^{(n+1)}_{22}(z^2)&=\frac{1}{24}Q_{n}Q_{n+1/2}R_{n+1/2}R_{n+1}
+\frac{1}{6}\left(Q_{n}R_{n+1/2} + Q_{n+1/2}R_{n+1}\right)z^2\\
&\qquad+\left(1+\frac{1}{6}Q_{n+1/2}R_{n+1/2}\right)z^4,
\end{split}
\end{equation*}
and,
\begin{align*}
M^{(n+1)}_{12}(z^2)
&=\frac{1}{6}Q_{n}\Xi_{n+1/2}+\frac{2}{3}Q_{n+1/2}z^2
+\frac{1}{6}Q_{n+1}\Xi_{n+1/2}z^4,\\
M^{(n+1)}_{21}(z^2)&=\frac{1}{6}R_{n}\Xi_{n+1/2}z^4+\frac{2}{3}R_{n+1/2}z^2
+\frac{1}{6}R_{n+1}\Xi_{n+1/2},
\end{align*}
where 
\[
\Xi_{n+1/2} = \left(1+\frac{1}{2}Q_{n+1/2}R_{n+1/2}\right).
\]

For the sake of completeness, let us discuss a general method to design
implicit RK methods with arbitrary number of stages: \emph{the collocation
method}~\cite{HNW1993}. It is 
a straight-forward recipe to design implicit RK methods which leads to dense $A$
matrix. Taking the nodes to be $c_k = (k-1)/(s-1)$, the coefficients of the RK
method are given by
\begin{equation}
a_{jk}=\int_0^{c_j}L_k(\xi)d\xi,\quad
b_k=\int_0^{1}L_k(\xi)d\xi=a_{sk},
\end{equation}
where
\begin{equation}
L_k(\xi)=\prod_{n\neq k,n=1}^s\frac{\xi-c_n}{c_k-c_n}.
\end{equation}
Clearly $a_{1k}\equiv0$. The order of convergence of these methods turn out to
be $\bigO{h^s}$ if $s$ is even while $\bigO{h^{s+1}}$ if $s$ is odd. 
An example of a $5$-stage method of order $6$ is
\begin{equation}
\renewcommand\arraystretch{1.5}
\begin{array}
{c|rrrrr}
0           &                0 &                0 &               0&               0&              0\\
\frac{1}{4} & \frac{251}{2880} & \frac{323}{1440} & -\frac{11}{120}& \frac{53}{1440}& -\frac{19}{2880}\\
\frac{1}{2} &   \frac{29}{360} &    \frac{31}{90} &    \frac{1}{15}&    \frac{1}{90}&  -\frac{1}{360}\\
\frac{3}{4} &   \frac{27}{320} &   \frac{51}{160} &    \frac{9}{40}&  \frac{21}{160}&  -\frac{3}{320}\\
1           &     \frac{7}{90} &    \frac{16}{45} &    \frac{2}{15}&   \frac{16}{45}&    \frac{7}{90}\\
\hline
    & \frac{7}{90} & \frac{16}{45} & \frac{2}{15} & \frac{16}{45} & \frac{7}{90}
\end{array}
\end{equation}
Note that on account of the dense nature of $A$, the computed transfer matrix
(for increasing number of stages) becomes increasingly complicated and 
cumbersome to implement. A remedy to this situation
is to design diagonally implicit RK methods on the uniform grid. It appears that
such a program would be more realistically pursued in the framework of 
\emph{general linear methods} introduced by Butcher and his
school~\cite{Butcher2003,J2009}. This method would be taken up in a forthcoming 
paper of this series.

We conclude this section with the example of a $6$-stage explicit method of order $5$ due
to Kutta~\cite{Butcher2003}:
\begin{equation}
\renewcommand\arraystretch{1.5}
\begin{array}
{c|rrrrrr}
0           &               &  &  &  & & \\
\frac{1}{4} &   \frac{1}{4} &  &  &  & & \\
\frac{1}{4} &   \frac{1}{8} &  \frac{1}{8}&  &  & & \\
\frac{1}{2} &             0 &            0& \frac{1}{2}&  & & \\
\frac{3}{4} &  \frac{3}{16} & -\frac{3}{8}& \frac{3}{8}& \frac{9}{16}& & \\
1           & - \frac{3}{7} & \frac{8}{7} & \frac{6}{7}&-\frac{12}{7}&\frac{8}{7}& \\
\hline
    & \frac{7}{90} &0& \frac{16}{45} & \frac{2}{15} & \frac{16}{45} & \frac{7}{90}
\end{array}
\end{equation}
We label this method as ``RK6''. The transfer matrix for this method can be
computed easily using any computer algebra system (the expressions are being
omitted for the sake of brevity of presentation).


\subsection{Linear multistep methods}\label{sec:LMM}
Fast nonlinear Fourier transform based on exponential linear multistep methods
(LMM) were introduced in~\cite{V2018LPT} and used as benchmarking tool
in~\cite{V2018BL}. This family of methods are well suited for vanishing boundary
condition; however, periodic boundary conditions would require a Runge-Kutta
based starting procedure. For the sake of comparison with the Runge-Kutta
approach, we revisit the discretization based on a general zero-stable LMM 
which can be stated as
\begin{equation}\label{eq:LMMs}
\sum_{s=0}^m\alpha_s\wtilde{\vv{v}}_{n+s} =
h\sum_{s=0}^m\beta_s\wtilde{U}_{n+s}\wtilde{\vv{v}}_{n+s},
\end{equation}
where $\vs{\alpha} = (\alpha_0,\alpha_1,\ldots,\alpha_m)$ and 
$\vs{\beta} = (\beta_0,\beta_1,\ldots,\beta_m)$. Without loss of generality, we
may set $\alpha_m\equiv1$. Solving for $\wtilde{\vv{v}}_{n+m}$, we have
\begin{equation}
\wtilde{\vv{v}}_{n+m}=
-\left(\sigma_0-\frac{\beta_m}{\alpha_m}h\wtilde{U}_{n+m}\right)^{-1}
\sum_{s=0}^{m-1}
\left({\alpha_s}\sigma_0-{\beta_s}h\wtilde{U}_{n+s}\right)\wtilde{\vv{v}}_{n+s},
\end{equation}
or, equivalently,
\begin{equation}
{\vv{v}}_{n+m}=-\left(\sigma_0-\beta_m h{U}_{n+m}\right)^{-1}
\sum_{s=0}^{m-1}e^{-i\sigma_3\zeta h(m-s)}
\left({\alpha_s}\sigma_0-\beta_s h{U}_{n+s}\right)
\vv{v}_{n+s}.
\end{equation}
Let $z=e^{i\zeta h}$ and 
\begin{equation}
\left(\sigma_0-\beta_mh{U}_{n+m}\right)^{-1}
e^{-i\sigma_3\zeta h(m-s)}
\left({\alpha_s}\sigma_0-\beta_s h{U}_{n+s}\right)
=\frac{z^{-(m-s)}}{\Theta_{n+m}}M^{(m-s)}_{n+m}(z^2),
\end{equation}
where $\Theta_{n+m}=(1-{\beta}_m^2R_{n+m}Q_{n+m})$ and 
\begin{equation}
M^{(m-s)}_{n+m}(z^2)=
\begin{pmatrix}
{\alpha}_s - {\beta}_m{\beta}_sQ_{n+m}R_{n+s}z^{2(m-s)}
&{\beta}_m{\alpha}_sR_{n+m}-{\beta}_sR_{n+s}z^{2(m-s)}\\
-{\beta}_sQ_{n+s} + {\beta}_m{\alpha}_sQ_{n+m}z^{2(m-s)}
&-{\beta}_m{\beta}_sQ_{n+s}R_{n+m}+{\alpha}_sz^{2(m-s)}
\end{pmatrix}.
\end{equation}
On simplifying, we have
\begin{equation}
z^m{\vv{v}}_{n+m}=-\frac{1}{\Theta_{n+m}}\sum_{s=0}^{m-1}M^{(m-s)}_{n+m}(z^2)z^{s}\vv{v}_{n+s},
\end{equation}
which can be put into a one-step form by defining
\begin{equation}
{\mathcal{M}}_{n+m}(z^2) =
\begin{pmatrix}
-M^{(1)}_{n+m}& -M^{(2)}_{n+m}&
\ldots& -M^{(m-1)}_{n+m}&-M^{(m)}_{n+m}\\
\Theta_{n+m}\sigma_0 &0&\ldots&0&0\\
0&\Theta_{n+m}\sigma_0&\ldots&0&0\\
\vdots&\vdots &\ddots & \vdots& \vdots\\
0&0&\ldots&\Theta_{n+m}\sigma_0&0
\end{pmatrix},
\end{equation}
so that
\begin{equation}
\pmb{\mathcal{W}}_{n+m}=\frac{1}{\Theta_{n+m}}{\mathcal{M}}_{n+m}(z^2)\pmb{\mathcal{W}}_{n+m-1},
\end{equation}
where $\vv{w}_n=z^{n}\vv{v}_n$ and 
\[
\pmb{\mathcal{W}}_{n}=(\vv{w}_{n},\vv{w}_{n-1},\ldots,\vv{w}_{n-m+1})^{\tp}\in\field{C}^{2m}.
\]
Let us consider the Jost solution $\vs{\phi}(t;\zeta)$. Set the computational
domain to be $\Omega=[T_1, T_2]$ with $-h\ell_-=T_1$ and $h\ell_+=T_2$. We
assume that $q(t)$ is supported in $\Omega$ and $q_n=0$ for 
$n=-m+1, -m+2,\ldots,0$ so that 
$\vs{\phi}_n=z^{\ell_-}z^{-n}(1,0)^{\tp}$ for $n=-m+1,-m+2,\ldots,0$. In order 
to express the discrete approximation to the Jost solutions, let us define 
the vector-valued polynomial
\begin{equation}\label{eq:poly-vec}
\vv{P}_n(z^2)
=\begin{pmatrix}
P^{(n)}_{1}(z^2)\\
P^{(n)}_{2}(z^2)
\end{pmatrix}
=\sum_{j=0}^{n}
\vv{P}^{(n)}_{j}z^{2j}
=\sum_{j=0}^{n}
\begin{pmatrix}
P^{(n)}_{1,j}\\
P^{(n)}_{2,j}
\end{pmatrix}z^{2j},
\end{equation}
such that $\vs{\phi}_n = z^{\ell_-}z^{-n}\vv{P}_n(z^2)$. The initial condition
works out to be 
\begin{equation}
\pmb{\mathcal{W}}_{0}
=z^{\ell_-}
\begin{pmatrix}
\vs{\phi}_{0}\\
z\vs{\phi}_{-1}\\
\vdots\\
z^{-m+1}\vs{\phi}_{-m+1}
\end{pmatrix}
=z^{\ell_-}
\begin{pmatrix}
\vs{P}_{0}(z^2)\\
\vs{P}_{-1}(z^2)\\
\vdots\\
\vs{P}_{-m+1}(z^2)
\end{pmatrix}
=z^{\ell_-}
\begin{pmatrix}
1\\
0\\
\vdots\\
1\\
0
\end{pmatrix},
\end{equation}
yielding the recurrence relation
\begin{equation}
\pmb{\mathcal{P}}_{n+m}(z^2)=\frac{1}{\Theta_{n+m}}{\mathcal{M}}_{n+m}(z^2)\pmb{\mathcal{P}}_{n+m-1}(z^2),
\end{equation}
where 
$\pmb{\mathcal{P}}_{n}(z^2) =
(\vv{P}_{n}(z^2),\vv{P}_{n-1}(z^2),\ldots,\vv{P}_{n-m+1}(z^2))^{\tp}\in\field{C}^{2m}$.
From~\eqref{eq:phi-init-final}, the discrete scattering coefficients work out to be
\begin{equation}
\begin{split}
&a_{N}(z^2)={P}^{(N)}_1(z^2),\\
&b_{N}(z^2)=(z^2)^{-\ell_{+}}{P}^{(N)}_2(z^2),
\end{split}
\end{equation}
which are uniquely defined for $\Re\zeta\in[-{\pi}/{2h},\,{\pi}/{2h}]$. 
Finally, the overall computational complexity of the direct NFT excluding the cost of
computing eigenvalues again works out to be
$\bigO{N(K+\log^2N)}$~\cite{V2018LPT}.

\subsubsection{Stability and convergence analysis}
The stability and convergence analysis of the multistep method can be carried
out by first formulating it as a one-step method. The discussion provided below
is taken from~\cite[Chap.~4, Part~III]{HNW1993}: Define the augmented vector
\begin{equation}
\wtilde{\vs{\Upsilon}}_n
=\left(\tilde{\vv{v}}_{n+m-1},\tilde{\vv{v}}_{n+m-2},\ldots,\tilde{\vv{v}}_{n}\right)^{\tp}\in\field{C}^{2s},
\end{equation}
and the diagonal matrix
\begin{equation}
\wtilde{D}_n=\diag\left(h\wtilde{U}_{n+m-1},\ldots,h\wtilde{U}_{n}\right)\in\field{C}^{2s\times2s}.
\end{equation}
Then the method~\eqref{eq:LMMs} can be stated as
\begin{multline}\label{eq:LMM-one-step}
\wtilde{\vs{\Upsilon}}_{n+1}
=\diag\left[\frac{1}{\Theta_{n+m}}\left(\sigma_0+{\beta}_mh\wtilde{U}_{n+m}\right),
\sigma_0,\ldots,\sigma_0\right]\times\\
\left[(A\otimes\sigma_0)+(B\otimes\sigma_0)\wtilde{D}_n\right]
\wtilde{\vs{\Upsilon}}_n,
\end{multline}
where
\begin{equation}
A=
\begin{pmatrix}
-{\alpha}_0&-{\alpha}_1&
\ldots&-{\alpha}_{m-2}&-{\alpha}_{m-1}\\
1 &0&\ldots&0&0\\
0&1&\ldots&0&0\\
\vdots&\vdots &\ddots & \vdots& \vdots\\
0&0&\ldots&1&0
\end{pmatrix},\quad
B=
\begin{pmatrix}
{\beta}_0&{\beta}_1&
\ldots&{\beta}_{m-1}\\
0 &0&\ldots&0\\
0&0&\ldots&0\\
\vdots&\vdots &\ddots & \vdots\\
0&0&\ldots&0
\end{pmatrix}.
\end{equation}
An LMM is stable if the generating polynomial 
\begin{equation}
\varrho(z)=\alpha_m z^m+\alpha_{m-1}z^{m-1}+\ldots+\alpha_0,
\end{equation}
satisfies the root condition, i.e., roots of $\varrho(z)$ lie on or within the
unit circle, and, the roots on the unit circle are simple. This is the 
well-known \emph{zero-stability} criteria. This property directly implies that 
$\|A\otimes\sigma_0\|\leq1$ where $\|\cdot\|$ is some matrix norm induced 
by a vector norm. Putting
\begin{multline}
\wtilde{\Lambda}_{n+1}(t_n;h)
=\frac{1}{h}\diag\left[\frac{1}{\Theta_{n+m}}\left(\sigma_0+{\beta}_mh\wtilde{U}_{n+m}\right),
\sigma_0,\ldots,\sigma_0\right]\times\\
\left[(A\otimes\sigma_0)+(B\otimes\sigma_0)\wtilde{D}_n\right]-\frac{1}{h}(A\otimes\sigma_0),
\end{multline}
it is easy to conclude that the method is \emph{consistent} since
\[
\wtilde{\Lambda}_{n+1}(t_n;0)=(B\otimes\sigma_0)(I_{2m}\otimes\wtilde{U}_{n})
=(B\otimes\wtilde{U}_{n}).
\]
The method~\eqref{eq:LMM-one-step} now written as
\begin{equation}
\wtilde{\vs{\Upsilon}}_{n+1}=
(A\otimes\sigma_0)\wtilde{\vs{\Upsilon}}_n+
h\wtilde{\Lambda}_{n+1}(t_n;h)\wtilde{\vs{\Upsilon}}_n,
\end{equation}
makes it manifestly evident that it is convergent provided 
$\|\wtilde{\Lambda}_{n+1}(t_n;h)\|_s$ is bounded which follows from the
boundedness of the potential.

\subsubsection{Explicit Adams Method}
The explicit Adams method (see Table~\ref{tb:EA}) can be stated as
\begin{equation}\label{eq:explicit-Adams}
\wtilde{\vv{v}}_{n+m}-\wtilde{\vv{v}}_{n+m-1}=
h\sum_{s=0}^{m-1}\beta_s\wtilde{U}_{n+s}\wtilde{\vv{v}}_{n+s},
\end{equation}
where $\beta_m$ is identically $0$ so that $\Theta_{n+m}=1$ and
\begin{equation}
M^{(m-s)}_{n+m}(z^2)=
\begin{pmatrix}
{\alpha}_s&-{\beta}_sQ_{n+s}\\
-{\beta}_sR_{n+s}z^{2(m-s)}&{\alpha}_sz^{2(m-s)}
\end{pmatrix},\quad s<m.
\end{equation}

\begin{table}[tbph!]
\def\arraystretch{1.75}
\caption{Explicit Adams Method\label{tb:EA}}
\begin{center}
\begin{tabular}{ccc}
Method & $\vs{\beta}$ & Order\\
\hline
EA$_1$ & $(1,0)$ & $1$\\ 
\hline
EA$_2$ & $(-\frac{1}{2},\frac{3}{2},0)$ & $2$\\ 
\hline
EA$_3$ & $(\frac{5}{12},-\frac{16}{12},\frac{23}{12},0)$ &$3$\\ 
\hline
EA$_4$ & $(-\frac{9}{24},\frac{37}{24},-\frac{59}{24},\frac{55}{24},0)$ &$4$\\ 
\hline
EA$_5$ &
$(\frac{251}{720},-\frac{1274}{720},-\frac{2616}{720},-\frac{2774}{720},\frac{1901}{720},0)$ &$5$\\ 
\hline\\
\end{tabular}
\end{center}
\end{table}

\subsubsection{Implicit Adams Method}
The implicit Adams method (see Table~\ref{tb:IA}) can be stated as
\begin{equation}\label{eq:implicit-Adams}
\wtilde{\vv{v}}_{n+m}-\wtilde{\vv{v}}_{n+m-1}=
h\sum_{s=0}^{m}\beta_s\wtilde{U}_{n+s}\wtilde{\vv{v}}_{n+s},
\end{equation}
so that $\Theta_{n+m}=(1-{\beta}_m^2R_{n+m}Q_{n+m})$ with
\begin{equation}
M^{(1)}_{n+m}(z^2)
=-\begin{pmatrix}
1 + {\beta}_m{\beta}_{m-1}Q_{n+m}R_{n+m-1}z^{2}
&{\beta}_sQ_{n+m-1} + {\beta}_mQ_{n+m}z^{2}\\
{\beta}_mR_{n+m}+{\beta}_{m-1}R_{n+s}z^{2}
&{\beta}_m{\beta}_{m-1}Q_{n+s}R_{n+m}+z^{2}
\end{pmatrix},
\end{equation}
and, for $s<m-1$,
\begin{equation}
M^{(m-s)}_{n+m}(z^2)
=-{\beta}_s
\begin{pmatrix}
{\beta}_mQ_{n+m}R_{n+s}z^{2(m-s)}
&Q_{n+s}\\
R_{n+s}z^{2(m-s)}
&{\beta}_m Q_{n+s}R_{n+m}
\end{pmatrix}.
\end{equation}

\begin{table}[tbph!]
\def\arraystretch{1.75}
\caption{Implicit Adams Method\label{tb:IA}}
\begin{center}
\begin{tabular}{ccc}
Method & $\vs{\beta}$ & Order\\
\hline
IA$_1$ & $(\frac{1}{2},\frac{1}{2})$ & $2$\\ 
\hline
IA$_2$ & $(-\frac{1}{12},\frac{8}{12},\frac{5}{12})$ & $3$\\ 
\hline
IA$_3$ & $(\frac{1}{24},-\frac{5}{24},\frac{19}{24},\frac{9}{24})$ &$4$\\ 
\hline
IA$_4$ &
$(-\frac{19}{720},\frac{106}{720},-\frac{264}{720},\frac{646}{720},\frac{251}{720})$ &$5$\\ 
\hline\\
\end{tabular}
\end{center}
\end{table}
\subsubsection{Backward Differentiation Formula}
Discretization using the BDF (see Table~\ref{tb:BDF}) can be stated as 
\begin{equation}
\sum_{s=0}^m\alpha_s\wtilde{\vv{v}}_{n+s} =
h\beta\wtilde{U}_{n+m}\tilde{\vv{v}}_{n+m},
\end{equation}
where $\beta_m\equiv\beta$ so that $\Theta_{n+m}=(1-{\beta}^2R_{n+m}Q_{n+m})$ and
\begin{equation}
M^{(m-s)}_{n+m}(z^2)=
{\alpha}_s\begin{pmatrix}
1      & {\beta}Q_{n+m}z^{2(m-s)},\\
{\beta}R_{n+m} & z^{2(m-s)}
\end{pmatrix},\quad s<m.
\end{equation}

\begin{table}[tb!]
\def\arraystretch{1.75}
\caption{\label{tb:BDF}Backward differentiation formula}
\begin{center}
\begin{tabular}{ccc }
{Method} & $\vs{\alpha}$ & $\beta$\\
\hline
{BDF}$_1$ & $\left(-1,1\right)$                        & $1$\\
\hline
{BDF}$_2$ & $\left(\frac{1}{3},-\frac{4}{3}, 1\right)$ & $\frac{2}{3}$\\
\hline
{BDF}$_3$ & $\left(-\frac{2}{11}, \frac{9}{11}, -\frac{18}{11},1\right)$ &
$\frac{6}{11}$\\
\hline
{BDF}$_4$ & $\left(\frac{3}{25}, -\frac{16}{25},
\frac{36}{25},-\frac{48}{25},1\right)$ & $\frac{12}{25}$\\
\hline
{BDF}$_5$ & $\left(-\frac{12}{137},\frac{75}{137},-\frac{200}{137},
\frac{300}{137},-\frac{300}{137},1\right)$ & $\frac{60}{137}$\\
\hline
{BDF}$_6$ & $\left(\frac{10}{147}, -\frac{72}{147},\frac{225}{147},
-\frac{400}{147},\frac{450}{147},-\frac{360}{147},1\right)$ & $\frac{60}{147}$\\
\hline
\end{tabular}
\end{center}
\end{table}

\section{Conclusion}
To conclude, we explored in this paper the theoretical aspects of the
exponential Runge-Kutta and linear multistep methods for the numerical solution
of the Zakharov-Shabat scattering problem. The discrete system obtained can be 
conveniently stated using a transfer matrix (TM) relation whose explicit form was 
provided for both of these family of methods. Looking at the structure of the 
TMs involved, it can be concluded that the Runge-Kutta methods lead to a more 
optimal TM relation as compared to that of the linear multistep methods; however, 
the TM for the linear multistep methods
is far more easy to derive as well as implement. Both of these family of methods
lead to a discrete system that is amenable to FFT-based fast polynomial
arithmetic which yields fast algorithms for the computation of discrete
scattering coefficients. 

Based on the ideas presented in this paper, it is not difficult conclude that
(exponential) general linear methods developed by 
Butcher~\cite{Butcher2003,J2009} can also be exploited to obtain a family of fast 
NFT algorithms. These ideas will be explored in
forthcoming paper where we hope to present a comprehensive comparative study of
these numerical algorithms.
\appendix
\section{Some useful identities}
Cosider a $2\times2$ matrix $A$, then
\begin{equation}
A=\begin{pmatrix}
a_{11} & a_{12}\\
a_{21} & a_{22}
\end{pmatrix},\quad
A^{-1}=\frac{1}{\det{A}}
\begin{pmatrix}
 a_{22} & -a_{12}\\
-a_{21} &  a_{11}
\end{pmatrix}.
\end{equation}
Further,
\begin{equation}
\sigma_1A\sigma_1=
\begin{pmatrix}
a_{22} & a_{21}\\
a_{12} &  a_{11}
\end{pmatrix},
\quad\sigma_2A\sigma_2=
\begin{pmatrix}
a_{22} & -a_{21}\\
-a_{12} &  a_{11}
\end{pmatrix},
\end{equation}
and,
\begin{align}
&e^{i\sigma_3\lambda}Ae^{-i\sigma_3\lambda}
=\begin{pmatrix}
a_{11}               & a_{12}e^{2i\lambda}\\
a_{21}e^{-2i\lambda} & a_{22}
\end{pmatrix},\\
&\left(e^{i\sigma_3\lambda}Ae^{-i\sigma_3\lambda}\right)^{-1}=
e^{i\sigma_3\lambda}A^{-1}e^{-i\sigma_3\lambda}.
\end{align}

If $A$, $B$, $C$ and $D$ are matrices of such size that one can form the matrix
products $AC$ and $BD$, then
\[
(A\otimes B)(C\otimes D)=(AC)\otimes(BD).
\]

Suppose $A$, $B$, $C$, and $D$ are matrices of dimension $n\times n$, $n\times
m$, $m\times n$, and $m\times m$, respectively. If $A$ is invertible, one has
\[
\det\begin{pmatrix}
A & B\\
C & D \end{pmatrix}
=\det(A)\det(D - CA^{-1}B),
\] 
while, if $D$ is invertible, we have
\[
\det\begin{pmatrix}
A & B\\
C & D \end{pmatrix}
=\det(D)\det(A - BD^{-1}C).
\] 
Two interesting special cases when $m=1$ are as follows:
\[
\det\begin{pmatrix}
A & B\\
C & 1 \end{pmatrix}
=\det(A - BC),
\] 
and
\[
\det\begin{pmatrix}
A & B\\
C & 0 \end{pmatrix}
=\det(A - BC)-\det A.
\] 


\providecommand{\noopsort}[1]{}\providecommand{\singleletter}[1]{#1}%

\end{document}